\documentclass[a4paper, 11pt]{article}
\usepackage{bbm, booktabs, threeparttable}
\usepackage[margin=1in]{geometry}
\usepackage[linesnumbered,ruled,vlined]{algorithm2e}
\usepackage{amsfonts, bm, enumerate}
\usepackage{amsmath,amsthm,amssymb}

\newtheorem{theorem}{Theorem}

\newtheorem{lemma}{Lemma}
\newtheorem{proposition}{Proposition}

\newtheorem{assumption}{Assumption}

{\theoremstyle{THkey}\newtheorem{restatetheorem}{Theorem}}

\usepackage{mathtools,xfrac}
\usepackage{color,xcolor}

\usepackage{authblk}

\usepackage{pdfsync,array}
\usepackage[normalem]{ulem}
\usepackage{float}
\usepackage{verbatim}
\usepackage{url}

\usepackage{multirow, multicol}
 
\usepackage{graphicx}
\usepackage{epsfig}
\usepackage{subfigure}
\usepackage{apacite}
\usepackage{natbib}
\usepackage[colorlinks=true,citecolor=blue]{hyperref}
\usepackage[capitalize]{cleveref}
\crefformat{equation}{(#2#1#3)}
\crefrangeformat{equation}{(#3#1#4)--(#5#2#6)}
\crefname{assumption}{Assumption}{Assumptions}
\Crefname{assumption}{Assumption}{Assumptions}

\def\minimize{\mathop{\rm \text{minimize}}}%
\def\argmax{\mathop{\rm \text{argmax}}}%
\def\argmin{\mathop{\rm \text{argmin}}}%

\def\D{\mathcal{D}}

\newcommand{\cS}{\mathcal{S}}

\def\X{\mathcal{X}}
\def\Z{\mathcal{Z}}
\def\B{\mathcal{B}}
\def\Q{\mathcal{Q}}

\def\ie{\emph{i.e.}, }

\newcommand{\BSP}{\textbf{BSP}}
\newcommand{\EBSP}{\textbf{EBSP}}
\newcommand{\RBSP}{\textbf{RBSP}}

\title{An Exact Algorithm for Mixed-Integer Bilevel Stochastic Problem}
\author[1]{Tom\'as Lagos\thanks{tomas.lagos@unsw.edu.au}}
\author[2]{Dmytro Matsypura\thanks{dmytro.matsypura@sydney.edu.au}}
\affil[1]{University of New South Wales}
\affil[2]{The University of Sydney}

\begin{document}
\maketitle

\begin{abstract}%
We study a class of mixed-integer bilevel stochastic programs in which the leader commits to a first-stage decision before uncertainty is realized, and the follower solves a mixed-integer optimization problem for each revealed scenario. Due to the hierarchical structure and discrete variables at both levels, these problems are inherently $\Sigma_2^p$-hard, rendering standard single-level reformulations computationally intractable. To address this challenge, we develop an exact algorithm that combines deterministic value-function reformulations with scenario-wise decomposition. Specifically, we propose an extended single-level reformulation and a corresponding relaxation that enable scenario decomposition. We then introduce a stochastic subgradient cutting-plane scheme that dynamically generates follower optimality cuts and updates the Lagrange multipliers. We prove that, under boundedness assumptions, our algorithm converges in finite time to a global optimum and provides valid upper and lower bounds throughout its execution.

\end{abstract}%


\section{Introduction}

Bilevel optimization has many applications in areas such as machine learning, defense, natural-resource management, logistics, energy systems, and telecommunications \citep{dempe2020bileveloptimizationadvances}. In most of these settings, an \emph{upper-level} decision must be made ex-ante, before some uncertainty is realized, while a \emph{lower-level} decision maker optimizes their response ex-post, after the uncertainty is revealed. This leads naturally to the framework of \emph{bilevel stochastic programming}. Although some progress has been made on bilevel models under uncertainty \citep[see][and the references therein]{BECK2023survey}, the case where the lower-level problem contains both integer and continuous variables remains largely unaddressed. In contrast, deterministic bilevel mixed-integer optimization has seen significant algorithmic advances over the past two decades \citep{lozano2017avaluefunction, Tahernejad2020branch-and-cut, Ralphs2025MibS}. To fill this gap in the bilevel stochastic optimization literature, we propose an exact algorithm tailored for mixed-integer recourse.

Let $[m]$ denote the set $\{1,2,\ldots,m\}$. The class of bilevel stochastic programs we consider has the following form:
\begin{subequations}
    \begin{align}
        \minimize_{x} \quad & \sum_{s \in \cS} p_s \phi_s^u (x,z_s) && &\tag{\BSP}\label{eq:bilevel_stochastic_program} \\
        \text{subject to} \quad & h_j^{x,u}(x) \ge b_j^u, && j \in [m^u], \label{eq:upper_level_constraints} \\
                & x \in \mathbb{R}^{n_1^u} \times \mathbb{Z}^{n_2^u}, && \\
                & z_s \in \D_s(x), && s \in \cS. \label{eq:bilevel_feasibility}
    \end{align}
\end{subequations}
where $\D_s(x)$ is the set of optimal solutions of the $x$-parameterized problem
\begin{equation}\label{eq:BSP_follower_problem}
\begin{aligned}
\minimize_{z_s} \quad & \phi_s^l (x, z_s) \\
\text{subject to} \quad & h_{j,s}^{x,l} (x) +  h_{j,s}^{z,l} (z_s) \ge b_{j,s}^l, \quad j \in [m^l],\\
& z_s \ge 0, \; z_s  \in \mathbb{R}^{n_1^l} \times \mathbb{Z}^{n_2^l}.
\end{aligned}
\end{equation}
Here, the leader selects a deterministic decision $x$ (subject to deterministic feasibility constraints~\cref{eq:upper_level_constraints}), and after each scenario is revealed, the follower optimizes $z_s$. Constraint~\cref{eq:bilevel_feasibility} enforces both feasibility and optimality of the follower's response. We assume that the values $p_s$ are rational for all $s \in \cS$, and restrict ourselves to a cooperative tie-breaking rule, that is, among all equivalent optimal responses, the follower selects the one that minimizes the leader's objective. 

Our goal is to develop an exact algorithm to solve \ref{eq:bilevel_stochastic_program} and then establish its convergence and correctness. We begin by defining the following sets. Let $\X^1$ denote the set of extreme points of the set $\{x^1 \in \mathbb{R}^{n_1^u} \colon x^2 \in \mathbb{Z}^{n_2^u},  h_j^{x,u}(x^1 , x^2) \ge b_j^u, \, j \in [m^u] \}$. In other words, $\X^1$ is the set of extreme points of the continuous portion of the upper-level feasible set. Next, let
\begin{equation*}
    \X = \{  x = (x^1 , x^2) \colon x^1 \in \X^1, \, x^2 \in \mathbb{Z}^{n_2^u}, \, h_j^{x,u}(x^1 , x^2) \ge b_j^u, \, j \in [m^u] \}.
\end{equation*} 
Note that both $\X$ and $\X^1$ are finite by construction. Also, for each $s\in \cS$, we define the sets
\begin{equation*}
\Omega_s =  \left\{ (x,z_s) \colon \quad \begin{aligned}
h_{j,s}^{x,l} (x) +  h_{j,s}^{z,l} (z_s) \ge b_{j,s}^l, \; j \in [m^l],  \\ 
x \in \X, \; z_s \ge 0, \; z_s  \in \mathbb{R}^{n_1^l} \times \mathbb{Z}^{n_2^l} ,
\end{aligned}\right\}
\end{equation*} and $\Z_s = \{z_s \colon (x,z_s) \in \Omega_s \}.$ 
The set $\Omega_s$ contains all leader–follower pairs $(x,z_s)$ that satisfy the lower- and the upper-level constraints of scenario $s$, whereas $\Z_s$ collects all follower decisions that are feasible for at least one $x\in \X$. 

The following assumptions will be useful in our analysis.

\begin{assumption}\label{assumption:Omega_bounded}
    The sets $\Omega_s$ for all $s \in \cS$ and $\X$ are bounded. 
\end{assumption}

\begin{assumption}\label{assumption:discrete_hxl}
    Functions $h_{j,s}^{x,l} (x)$ are integer valued for all $j \in [m^l]$, $s \in \cS$ and $x \in \X$.
\end{assumption}

\cref{assumption:Omega_bounded} is satisfied for almost all practical applications, since real systems are typically bounded. It will help us prove the finite convergence of the proposed algorithm. 

\Cref{assumption:discrete_hxl} simplifies the exposition by allowing a reformulation that converts the strict inequality of the lower-level infeasibility condition ($b_{j,s}^l -  h_{j,s}^{z,l} (z_s) > h_{j,s}^{x,l}(x)$) into a non-strict inequality ($\lceil b_{j,s}^l -  h_{j,s}^{z,l} (z_s) \rceil - 1 \ge  h_{j,s}^{x,l} (x)$). However, this assumption can be relaxed by choosing a small $\delta>0$ such that $b_{j,s}^l -  h_{j,s}^{z,l} (z_s) \ge h_{j,s}^{x,l}(x) + \delta$ guarantees that $z_s$ is an infeasible response. Furthermore, since we assume that all values in \ref{eq:bilevel_stochastic_program} are rational, we can relax \cref{assumption:discrete_hxl} by scaling all of the constraints $h_{j,s}^{x,l} (x) +  h_{j,s}^{z,l} (z_s) \ge b_{j,s}^l$ in \eqref{eq:BSP_follower_problem} to obtain integer values for $h_{j,s}^{x,l} (x)$. Because \ref{eq:bilevel_stochastic_program} is $\Sigma_2^p$-hard (as discussed in \cref{subsection:computational_complexity}), this scaling does not affect the asymptotic worst-case size of the single-level reformulation.

Taken together, \cref{assumption:Omega_bounded} and \cref{assumption:discrete_hxl} (or its relaxation) guarantee that the relevant follower responses and their feasibility conditions can be represented exactly using a closed feasible region with finitely many constraints. These properties are essential for deriving an exact mixed-integer solution approach.

\subsection{Literature Review}

Algorithmic advances in deterministic mixed-integer bilevel linear programming (MIBLP) rely primarily on value-function reformulations and branch-and-cut frameworks. \cite{lozano2017avaluefunction} developed an exact algorithm based on the optimal-value-function reformulation, generating dynamic constraints to handle optimistic and pessimistic follower behaviors. This foundational concept has been extended into comprehensive branch-and-cut methods utilizing intersection cuts and bilevel-specific valid inequalities to manage upper-level infeasibility \citep{Fischetti2017newgeneralpurpose, Tahernejad2020branch-and-cut}. Recently, \cite{lefebvre2025dantzig} proposed the first single-level reformulation leveraging Dantzig-Wolfe decomposition to convexify the follower's problem without explicit reliance on the value function.

Bilevel optimization under uncertainty extends these hierarchical structures to stochastic environments, though the mixed-integer case remains a significant algorithmic bottleneck \citep{BECK2023survey}. A critical development in this domain is the generalized value function, parameterized by both the objective and right-hand side, which facilitates the exact solution of multi-follower and two-stage stochastic mixed-integer programs \citep{Tavasliouglu2019SolvingStochastic}. For models with specific structures, such as stochastic right-hand sides and exclusive integer lower level, specialized cutting-plane methods can achieve finite convergence \citep{Zhang2021BilevelInteger}.

Crucially, however, there is currently no exact scenario decomposition algorithmic framework capable of solving general multi-scenario stochastic bilevel programs, where the follower's problem includes a mix of continuous and discrete decisions. Our work directly addresses this methodological~need. 

\subsection{Computational Complexity}\label{subsection:computational_complexity}

Because every bilevel optimization problem comprises two interdependent optimization problems, an upper-level and a lower-level, governed by non-cooperating decision makers, its computational complexity fundamentally exceeds that of single-level optimization. In particular, even simple bilevel optimization problems are known to reside in the complexity class $\Sigma_2^p$ due to the presence of two alternating quantifiers (see \cite{Woeginger2021thetrouble} and references therein).

Consequently, the mixed-integer bilevel stochastic program we study is $\Sigma_2^p$-hard in general. This intractability is notably robust and persists under stringent structural restrictions. For example, even in settings where the upper- and lower-level decision makers share the same objective function with opposite signs, the problem remains $\Sigma_2^p$-hard (see, e.g., \cite{furini2019maximum, LAGOS2025bilevel}).

This classification has immediate algorithmic implications. Since mixed-integer linear programs (MILPs) are $NP$-hard in general, the $\Sigma_2^p$-hardness of \ref{eq:bilevel_stochastic_program} implies that it cannot admit a polynomial-size reformulation as a single-level MILP unless $NP = \Sigma_2^p$. Such an equality would imply a collapse of the polynomial hierarchy, which is widely regarded as highly unlikely. Therefore, the existence of a compact polynomial-size single-level reformulation of \ref{eq:bilevel_stochastic_program} is unlikely. This conclusion motivates the development of specialized algorithmic frameworks, such as the enumeration-based approach proposed herein, that explicitly account for the problem’s hierarchical structure.

\subsection{Contributions and Outline}

The primary contribution of this manuscript is the development of the exact scenario decomposition algorithmic framework for solving mixed-integer bilevel stochastic programs (\ref{eq:bilevel_stochastic_program}). As discussed in \cref{subsection:computational_complexity}, the inherent $\Sigma_2^p$-hardness of these problems precludes compact, polynomial-size single-level reformulations. By successfully bridging the methodological gap between deterministic value-function reformulations and stochastic scenario-wise decomposition, our work provides a framework for solving a more general class of multi-scenario hierarchical problems previously not considered. 

In practice, this theoretical result allows decision-makers to move beyond restrictive heuristics or more restrictive frameworks when dealing with hierarchical optimization under uncertainty. Because real-world applications inherently involve multiple uncertain future states and discrete recourse decisions, the size of the mathematical formulation can grow exponentially. Therefore, a scenario-wise scalable approach is critical to decouple this complexity, enabling the exact solution of large-scale stochastic instances that would otherwise be computationally prohibitive.

Our main theoretical result establishes that, despite the non-convexities introduced by the discrete lower-level variables, our proposed decomposition algorithm is guaranteed to converge to the true global optimum in a finite number of steps. More formally, we prove the following:

\begin{theorem}\label{theorem:correctness}
\cref{alg:SSCP} terminates after a finite number of iterations and produces a solution $x^\star$ such that \[
\sum_{s \in \mathcal{S}} p_s \, \phi_s^u(x^\star,z_s)
\;\leq\;
f^\star + \epsilon,\] where $(x^\star,z)$ satisfies \eqref{eq:upper_level_constraints}-\eqref{eq:bilevel_feasibility}, and $f^\star$ denotes the optimal value of \ref{eq:bilevel_stochastic_program}.
\end{theorem}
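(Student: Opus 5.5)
The argument rests on the two structural facts from the introduction: finiteness of $\X$, and boundedness with integrality (\cref{assumption:Omega_bounded,assumption:discrete_hxl}). I split it into three parts: \emph{validity of bounds}, \emph{finiteness of cut generation}, and \emph{termination with an $\epsilon$-certificate}.

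\textbf{Step 1: exact reformulation and bounds.} First I will show that \ref{eq:bilevel_stochastic_program} is equivalent to an extended value-function reformulation (\EBSP) over the finite set $\X$. An optimal solution may be taken with continuous leader part at an extreme point, so restricting to $\X^1$ loses nothing. In \EBSP, the condition $z_s\in\D_s(x)$ is replaced by follower optimality cuts. Each cut is indexed by a candidate response $\hat z_s\in\Z_s$ and states that either $\hat z_s$ is infeasible for $x$ or $\phi^l_s(x,z_s)\le\phi^l_s(x,\hat z_s)$. Infeasibility is written with the non-strict inequality $\lceil b^l_{j,s}-h^{z,l}_{j,s}(\hat z_s)\rceil-1\ge h^{x,l}_{j,s}(x)$, which \cref{assumption:discrete_hxl} permits.

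Relaxing the nonanticipativity constraints $x_s=x$ with Lagrange multipliers, and keeping only a subset of cuts, gives the relaxation \RBSP. Weak duality then makes every evaluation of \RBSP{} a valid lower bound on $f^\star$. For the upper bound, any leader candidate $\bar x\in\X$ produced by the algorithm is evaluated by solving each scenario follower problem and then the cooperative tie-breaking problem. The resulting objective value is attained by a feasible point of \eqref{eq:upper_level_constraints}--\eqref{eq:bilevel_feasibility}, so it is a valid upper bound.

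\textbf{Step 2: finiteness.} Next I will argue that only finitely many distinct cuts can ever be generated. Under \cref{assumption:Omega_bounded}, the integer part of $\Z_s$ is finite. The continuous part of a best response can be taken at an extreme point of a polyhedron determined by the integer part and by $x\in\X$. Both index sets are finite, so the set of possible cuts is finite.

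Each iteration either adds a new cut, which eliminates the current relaxation solution as bilevel-infeasible, or certifies that the relaxation solution is bilevel feasible. In the latter case the lower and upper bounds coincide for that multiplier, up to the duality gap. Consequently, after finitely many cut iterations the cut set stabilises, and \RBSP{} becomes exact in the sense of the value function on $\X$.

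\textbf{Step 3: the multipliers (main obstacle).} The hard part is that the stochastic subgradient updates of the Lagrange multipliers converge only asymptotically, and a Lagrangian dual of a mixed-integer program may have a positive gap. I would handle this in three moves:
\begin{itemize}
\item Use finiteness of $\X$ to express nonanticipativity through binary indicators of which $x\in\X$ is chosen. The Lagrangian dual then equals the convex-hull relaxation.
\item Combine the stabilised cut set with a branching or enumeration fallback on $\X$, presumably built into \cref{alg:SSCP}, so that the gap closes.
\item Invoke the standard subgradient convergence result with diminishing steps, which reaches a dual value within $\epsilon$ of the dual optimum in finitely many iterations.
\end{itemize}
Once the gap between the upper and lower bounds falls below $\epsilon$, the stopping rule fires. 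The incumbent $x^\star$, together with its tie-broken responses $z_s$, then satisfies $\sum_s p_s\phi^u_s(x^\star,z_s)\le \mathrm{LB}+\epsilon\le f^\star+\epsilon$. I expect that reconciling the asymptotic subgradient scheme with \emph{finite} termination, in particular bounding the number of multiplier updates between cut additions, is the delicate part of the argument.
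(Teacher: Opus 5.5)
Your Step~1, Step~2 and the first and third bullets of Step~3 use the same ingredients as the paper (\cref{proposition:EBSP_equivalence_BSP}, \cref{proposition:lower_bound}, \cref{proposition:non_anticipativity_convergence}, \cref{theorem:convergence}). However, the step that actually delivers finite termination is missing.

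Diminishing-step subgradient convergence only drives $LB$ toward the Lagrangian dual value $f_r^\star(\widehat{\Z}^{\nu_0},\cS,\lambda^\star)$. By \cref{proposition:non_anticipativity_convergence} this is a convex-hull bound, and it can lie strictly below $f^\star$. In that case $UB-LB\ge f^\star-f_r^\star(\widehat{\Z}^{\nu_0},\cS,\lambda^\star)>\epsilon$ at every iteration, and your stopping argument never fires. You see this and appeal to a ``branching or enumeration fallback on $\X$, presumably built into \cref{alg:SSCP}''. But you neither identify the mechanism nor show that it terminates and keeps the bounds valid, so the central claim of the theorem remains unproved.

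Your Step~2 dichotomy also misdescribes the algorithm. Follower responses are added only when the consensus candidate $x=\hat x_{s^\star}$ is new ($x\notin\widehat{\X}$). So after $\widehat{\X}$ stabilises, a typical iteration neither adds a cut nor certifies feasibility; it only moves $\lambda$. Moreover, a cut generated at $x$ need not cut off scenario copies $\hat x_s\neq x$.

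The paper closes the argument by analysing the two concrete exits of \cref{alg:SSCP}. Because $\widehat{\X}\subseteq\X$ and $\X$ is finite, new candidates appear only finitely often. Between such events, whenever the minimum-norm element of the convex hull of the bundled subgradients is zero (the stall certificate $0\in\partial_\lambda f_r^\star$), the repeated candidate $x$ is removed from $\X$ by a no-good cut. This too can happen only finitely often. Hence the loop must end either through $UB-LB\le\epsilon$ or through the consensus branch $\bar x=x$.

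That second exit is absent from your proposal. Your ``incumbent $\le LB+\epsilon$'' argument does not cover it, since there the algorithm returns the current $x$, not the stored incumbent. The paper treats it directly:
\begin{itemize}
\item at consensus the scenario copies coincide with $x$;
\item the Lagrangian penalty vanishes, so the relaxation value equals $\sum_{s\in\cS}p_s\phi_s^u(x,\hat z_s)$;
\item the optimality cut \cref{eq:bilevel_feasibility_cuts_EBSP} for the true best response $\tilde z_s$ (which the paper takes to lie in $\widehat{\Z}_s$) gives $\phi_s^l(x,\hat z_s)\le\phi_s^l(x,\tilde z_s)$.
\end{itemize}
Thus $(x,\hat z)$ is bilevel feasible with value equal to a valid lower bound, hence optimal.

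Whatever exhaustion device you adopt, you also need to check that $LB$ still bounds $f^\star$ from below after points are deleted from $\X$. That requires every deleted $x$ to have been evaluated already, with its bilevel value reflected in $UB$.
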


In essence, Theorem \ref{theorem:correctness} guarantees that the absolute optimality gap of the returned solution $x^\star$ does not exceed the predefined tolerance $\epsilon$. In particular, when $\epsilon = 0$, the gap is zero, implying that $x^\star$ is a global optimum.

We establish this result through a sequence of structured steps. First, we derive an exact, single-level extended reformulation of \ref{eq:bilevel_stochastic_program} by explicitly enumerating the follower's best-response feasible space and applying non-anticipativity constraints to the leader's variables across all scenarios. Recognizing the ``L-shaped'' structure of this reformulation, we introduce a relaxation that dualizes the non-anticipativity constraints using Lagrangian multipliers, enabling scenario-wise decomposition. We then analytically establish that this Lagrangian dual provides a valid lower bound and enforces consensus over the convexified feasible space. Building on this, we design a stochastic subgradient cutting-plane method that iteratively generates follower optimality cuts and updates the multipliers. We prove that the sequence of lower bounds converges to the optimal solution of the convexified problem. Finally, because the sets of leader decisions and follower responses are finite, we demonstrate that augmenting the subgradient updates with a primal exhaustion mechanism (such as no-good cuts) guarantees finite termination at a true consensus solution that satisfies all lower-level optimality conditions.


Furthermore, we demonstrate the practical applicability and computational efficiency of our proposed framework through numerical experiments on two problem classes: the Uncertain Bilevel Knapsack problem \citep{Xue2017Uncertain}, detailed in \cref{sec:ubkp}, and the Bilevel Fuel Treatment Planning problem \citep{LAGOS2025bilevel}, summarized in \cref{subsec:biftp_summary} and reported in full in Section~\ref{sec:biftp} of the online companion.
For both problem classes, we also benchmark our framework against the generalized value-function reformulation of \citet{Tavasliouglu2019SolvingStochastic} under a common time and memory budget. These computational studies confirm that the framework returns valid bounds for every instance, closes the optimality gap on small and moderately sized instances, and retains tight bounds on the largest instances for which the benchmark reformulation returns no bounds.

The remainder of this paper is structured as follows. \Cref{sec:preliminaries} details the mathematical foundation of our framework, including the exact single-level reformulation, its relaxation, and the formal statement of the stochastic subgradient cutting-plane algorithm. \Cref{sec:main_results} presents the proofs of our main theoretical results. \Cref{sec:ubkp,sec:biftp} describe the numerical experiments. Finally, \Cref{sec:concluding_remarks} concludes the paper and discusses future research directions.

\section{Preliminaries}\label{sec:preliminaries}

Using the above setting, we now present a single-level reformulation of our bilevel stochastic problem \BSP, followed by its relaxation and the description of the proposed algorithm.

\subsection{Single-Level Reformulation} 
\label{subsec:relaxation_exact_alg}

We say that the leader solution $x$ \emph{blocks} the follower solution $z_s$ if and only if  $b_{j,s}^l -  h_{j,s}^{z,l} (z_s) > h_{j,s}^{x,l} (x)$ for some $j \in [m^l]$. Let $\gamma_{j,z_s}^s = \lceil b_{j,s}^l -  h_{j,s}^{z,l} (z_s) \rceil - 1$. Following a similar line of reasoning as in \cite{lozano2017avaluefunction}, we note that $\gamma_{j,z_s}^s \geq h_{j,s}^{x,l} (x)$ if and only if the follower solution $z_s \in \Z_s$ for scenario $s \in \cS$ is an infeasible response to the leader solution $x$, \ie the leader solution $x$ blocks the follower solution $z_s$. To see that this result holds, observe that due to \cref{assumption:discrete_hxl}: \begin{align*}
    & \gamma_{j,z_s}^s \geq  h_{j,s}^{x,l} (x) & \Rightarrow & \quad\lceil b_{j,s}^l -  h_{j,s}^{z,l} (z_s) \rceil - 1 \geq  h_{j,s}^{x,l} (x)  & \Rightarrow & \quad b_{j,s}^l -  h_{j,s}^{z,l} (z_s) > h_{j,s}^{x,l} , \\
    & \gamma_{j,z_s}^s <  h_{j,s}^{x,l} (x) & \Rightarrow & \quad\lceil b_{j,s}^l -  h_{j,s}^{z,l} (z_s) \rceil - 1 <  h_{j,s}^{x,l} (x) & \Rightarrow & \quad b_{j,s}^l -  h_{j,s}^{z,l} (z_s) \leq h_{j,s}^{x,l} .
\end{align*}

Next, we note that for any pair of follower solutions $z_s,\hat{z}_s \in \Z_s$, if $\gamma_{j,z_s}^s \geq \gamma_{j,\hat{z}_s}^s$ and $x$ blocks $\hat{z}_s$, then $x$ must also block $z_s$. We leverage this property to derive an exact reformulation based on the exhaustive enumeration of the follower's feasible space $\Z_s$. Letting $\widehat{\Z}_{s}$ denote a subset of follower feasible solutions, for any $\hat{z}_s \in \Z_s$, we define:
\begin{equation*}
    \B_s(z_s,\widehat{\Z}_{s}) = \{ (\hat{z}_s,j) \colon \gamma_{j,z_s}^s \ge \gamma_{j,\hat{z}_s}^s, \, \hat{z}_s \in \widehat{\Z}_{s}, \, j \in [m^l] \}.
\end{equation*} The sets $\B_s(z_s,\widehat{\Z}_{s})$ contain the pairs of solutions and constraint indices $(\hat{z}_s,j)$ such that if $\hat{z}_s$ is blocked, then $z_s$ is also blocked by means of the $j$th lower-level constraint.

Using the notation and assumptions introduced above, we construct the following \emph{extended} reformulation of \ref{eq:bilevel_stochastic_program}:
\begin{subequations}\label{subeq:exponential_reformulation_BSP}
    \begin{align}
        \minimize_{\tilde{x},z,w} \quad &   \sum_{s \in \cS} p_s  \phi_s^u (\tilde{x}_s,z_s) && \tag{\EBSP} \label{eq:exponential_reformulation_BSP} \\
        \text{subject to}\quad & (\tilde{x}_s,z_s) \in \Omega_s, && s \in \cS, \label{eq:upper_lower_level_feasibility} \\
        & \tilde{x}_s = \sum_{\hat{s} \in \cS} p_{\hat{s}} \tilde{x}_{\hat{s}}, && s \in \cS, \label{eq:nonanticipativity_constraint_EBSP} \\
        &  h_{j,s}^{x,l} (\tilde{x}_s) \le M_{j,s} + \sum_{\hat{z}_s \in \Z_s} (\gamma_{j,\hat{z}_s}^s-M_{j,s}) w_{\hat{z}_s,j}^s, && \begin{array}{l} j \in [m^l], 
        s \in \cS, \end{array} \label{eq:follower_blocked_solutions} \\
        &                   \phi_s^l (\tilde{x}_s,z_s) \le \phi_s^l (\tilde{x}_s,\hat{z}_s) + \tilde{M}_{\hat{z}_s,s} \sum_{(\tilde{z}_s,j) \in \B_s(\hat{z}_s,\Z_s)} w_{\tilde{z}_s,j}^s, && \begin{array}{l} \hat{z}_s \in \Z_s , 
        \, s \in \cS, \end{array} \label{eq:bilevel_feasibility_cuts_BSP} \\
        &                    w_{\hat{z}_s,j}^s \in \{0,1\}, && \begin{array}{l} j\in[m^l], \\ \hat{z}_s \in \Z_s ,
        s \in \cS. \end{array}  \label{eq:binary_nature_w}
    \end{align}
\end{subequations}
In \ref{eq:exponential_reformulation_BSP}, the variable $x$ is augmented with a scenario index $s \in \cS$, effectively expanding the decision vector to account for each realization of uncertainty independently. We further require that both the leader and the follower decision variables be feasible in \cref{eq:upper_lower_level_feasibility} for each scenario. 
To ensure this reformulation is equivalent to \ref{eq:bilevel_stochastic_program}, we impose \cref{eq:nonanticipativity_constraint_EBSP}, known as the \emph{non-anticipativity constraint} in stochastic programming. It allows us to rewrite the first-stage variables as second-stage variables, \ie, as decisions made when all uncertain information is revealed to the leader. It is easy to check that the non-anticipativity constraint \cref{eq:nonanticipativity_constraint_EBSP} restricts the leader's solution to have the same value across all scenarios by checking
\begin{equation*}
\tilde{x}_s = \sum_{\hat{s} \in \cS} p_{\hat{s}} \tilde{x}_{\hat{s}} = \tilde{x}_{s'}, \qquad \forall s,s' \in \cS. 
\end{equation*}
In \cref{eq:follower_blocked_solutions} we impose that any solution blocked must be an infeasible response to the selected leader solution $\tilde{x}_s$. We model whether solution $\hat{z}_s$ is blocked using the binary variable $w$, that is $w_{\hat{z}_s,j}^s$ is equal to 1 if and only if $\hat{z}_s$ is blocked by solution $x$ by means of the $j$th constraint. 
Consequently, if no constraint blocks solution $\hat{z}_s$, then this solution must be considered in \cref{eq:bilevel_feasibility_cuts_BSP}, thereby imposing that $z_s$ is at least as attractive to the follower as $\hat{z}_s$.

The following result follows directly from \citet[Proposition 3]{lozano2017avaluefunction}:
\begin{proposition}\label{proposition:EBSP_equivalence_BSP}
    \ref{eq:exponential_reformulation_BSP} is equivalent to \ref{eq:bilevel_stochastic_program}.
\end{proposition}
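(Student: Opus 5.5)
The plan is to prove equivalence in the strong sense: there is an objective-preserving correspondence between feasible points of \ref{eq:bilevel_stochastic_program} (with $x\in\X$) and the $(\tilde{x},z)$-projections of feasible points of \ref{eq:exponential_reformulation_BSP}. Since the objectives coincide under $\tilde{x}_s=x$, the optimal values and optimal solutions then agree. First I would dispose of the non-anticipativity constraints. By the observation after \ref{eq:exponential_reformulation_BSP}, \cref{eq:nonanticipativity_constraint_EBSP} forces $\tilde{x}_s=\tilde{x}_{s'}$ for all $s,s'$, so every feasible point of \ref{eq:exponential_reformulation_BSP} has the form $\tilde{x}_s=x$ for a single $x$. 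Conversely, such a common $x$ satisfies \cref{eq:nonanticipativity_constraint_EBSP} because $\sum_s p_s=1$. The problem then decouples into one copy per scenario of the deterministic value-function reformulation of \citet{lozano2017avaluefunction}, all sharing the same $x$.

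Second, I would restrict the leader to $\X$. The restriction $x\in\X$ is built into $\Omega_s$. I would argue that this loses nothing for \ref{eq:bilevel_stochastic_program}, and I expect this to be the delicate step. Given an optimal $x=(x^1,x^2)$ of \ref{eq:bilevel_stochastic_program}, I would fix $x^2$ and argue that some optimum is attained at an extreme point of the continuous slice. This needs linearity or concavity of the data in $x^1$. I would try to use linearity of $h^{x,l}$ together with a piecewise-constant structure of $\D_s$ over the slice. If this fails in full generality, I would state it as part of the standing modelling convention: the leader's feasible set is $\X$, which is how $\Omega_s$ is defined.

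Third, I would carry out the scenario-wise argument for a fixed $x\in\X$ and fixed $s$, mirroring \citet[Proposition 3]{lozano2017avaluefunction}. By \cref{assumption:discrete_hxl}, $x$ blocks $\hat z_s$ via constraint $j$ if and only if $h^{x,l}_{j,s}(x)\le\gamma^s_{j,\hat z_s}$.

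\emph{($\Rightarrow$)} Take $z_s\in\D_s(x)$. Set $w^s_{\hat z_s,j}=1$ for exactly one $\hat z_s$ with $\gamma^s_{j,\hat z_s}=\max\{\gamma^s_{j,\tilde z}:\tilde z\in\Z_s,\ \gamma^s_{j,\tilde z}\ge h^{x,l}_{j,s}(x)\}$, and set all other $w$ to $0$. This maximum exists because $\Z_s$ is finite-valued in the relevant sense under \cref{assumption:Omega_bounded}. Then the following hold.
\begin{itemize}
\item Constraint \cref{eq:follower_blocked_solutions} holds, with $M_{j,s}$ an upper bound on $h^{x,l}_{j,s}$ over $\X$.
\item If $\hat z_s$ is blocked via $j$, the chosen $\tilde z$ satisfies $\gamma^s_{j,\hat z_s}\le\gamma^s_{j,\tilde z}$. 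Hence $(\tilde z,j)\in\B_s(\hat z_s,\Z_s)$, the right-hand sum in \cref{eq:bilevel_feasibility_cuts_BSP} is at least one, and the constraint is slack for a large enough $\tilde M_{\hat z_s,s}$.
\item If $\hat z_s$ is not blocked, it is feasible for \eqref{eq:BSP_follower_problem}, so $\phi^l_s(x,z_s)\le\phi^l_s(x,\hat z_s)$ by optimality of $z_s$.
\end{itemize}

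\emph{($\Leftarrow$)} Take any feasible $w$. Constraint \cref{eq:follower_blocked_solutions} implies that $w^s_{\tilde z,j}=1$ only when $x$ truly blocks $\tilde z$. Otherwise the right-hand side equals $\gamma^s_{j,\tilde z}<h^{x,l}_{j,s}(x)$, since the $w$'s are effectively mutually exclusive per $j$. Given $M_{j,s}$ large, more than one active $w$ only tightens the bound, and I would check this sign carefully. By the monotonicity property of $\B_s$, any $\hat z_s$ whose cut is relaxed is then blocked. Hence every follower-feasible $\hat z_s$ has an enforced cut, and $z_s\in\D_s(x)$.

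Finally, the cooperative tie-breaking in \ref{eq:bilevel_stochastic_program} matches the minimization over $z_s$ in \ref{eq:exponential_reformulation_BSP}. Combining the scenarios gives the equivalence. The main obstacles are choosing valid big-$M$ constants, which exist by boundedness in \cref{assumption:Omega_bounded}, and the reduction to $\X$ in the second step.
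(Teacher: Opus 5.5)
Your overall route is what the paper's one-line citation of \citet[Proposition 3]{lozano2017avaluefunction} intends: collapse \eqref{eq:nonanticipativity_constraint_EBSP} to a common $x$, then argue scenario by scenario. Your ($\Leftarrow$) direction is sound. The sign check you defer works out if $M_{j,s}\ge\max_{x\in\X}h^{x,l}_{j,s}(x)$. Every $\tilde z\in\Z_s$ is feasible for some $x\in\X$, so $\gamma^s_{j,\tilde z}<h^{x,l}_{j,s}(x)\le M_{j,s}$, and hence the right-hand side of \eqref{eq:follower_blocked_solutions} is at most the smallest active $\gamma^s_{j,\tilde z}$.

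Your ($\Rightarrow$) construction, however, has the monotonicity backwards. By definition, $(\tilde z,j)\in\B_s(\hat z_s,\Z_s)$ requires $\gamma^s_{j,\hat z_s}\ge\gamma^s_{j,\tilde z}$, not $\le$. Suppose that for each $j$ you activate the blocked response with the \emph{largest} $\gamma^s_{j,\cdot}$. Then a blocked $\hat z_s$ with $h^{x,l}_{j,s}(x)\le\gamma^s_{j,\hat z_s}<\gamma^s_{j,\tilde z}$ keeps its cut \eqref{eq:bilevel_feasibility_cuts_BSP} enforced. The true best response can violate that cut, because $\hat z_s$ is infeasible for $x$ and may have a strictly better follower value.

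For example, take a single constraint $z\le c-x$, follower objective $-z$, $c-x=1$, and $z=2,3\in\Z_s$. Both $z=2$ and $z=3$ are blocked, you activate $z=3$, and the cut for $\hat z_s=2$ demands $-1\le-2$. The fix is to activate, for each $j$, a response attaining $\min\{\gamma^s_{j,\tilde z}:\tilde z\in\Z_s,\ \gamma^s_{j,\tilde z}\ge h^{x,l}_{j,s}(x)\}$. This minimum exists because these $\gamma$'s are integers bounded below. If no response is blocked via $j$, set all $w^s_{\cdot,j}=0$. Every blocked $\hat z_s$ then lies above the activated threshold, so its cut is relaxed.

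Your Step 2 is not merely delicate; it is false in general, so no argument through linearity of $h^{x,l}$ can deliver it. The follower's feasible set changes at thresholds $h^{x,l}_{j,s}(x)=\gamma^s_{j,\hat z_s}$ that can lie strictly inside the continuous slice, and the leader may want to sit exactly there. Consider one scenario with continuous leader $x\in[0,1]$, leader objective $-x-2z$, and follower problem $\min\{-z:\ -2x-z\ge-2,\ -z\ge-1,\ z\in\mathbb{Z}_{\ge 0}\}$. Both assumptions hold: $\X^1=\{0,1\}$, and $h^{x,l}_1=-2x$ is integral on $\X$. The follower plays $z=1$ if and only if $x\le 1/2$. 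The bilevel optimum is therefore $-5/2$ at $x=1/2\notin\X$, while the best point of $\X$ gives only $-2$.

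So the equivalence holds only when the leader ranges over $\X$, for instance when all leader variables are integer. That is the setting of \citet{lozano2017avaluefunction}, whose upper-level variables are required to be integer, and the paper's citation tacitly relies on it. Your fallback is therefore the right move. State it as an explicit hypothesis of the proposition, or read \ref{eq:bilevel_stochastic_program} with $x\in\X$, rather than as something you might still prove.
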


In contrast to \citet{lozano2017avaluefunction}, we consider multiple scenarios at the lower level, each with its own independent follower. To achieve scalability across these scenarios, our method draws on the ideas from \citet{Lagos2025SCA}. 

\subsection{Relaxation}

Although \ref{eq:exponential_reformulation_BSP} is exact, the exponential number of \cref{eq:follower_blocked_solutions}-\cref{eq:binary_nature_w} constraints prevents direct implementation in most cases. However, only a small fraction of these constraints are typically active, which motivates an iterative approach that adds blocking and dominance conditions only as needed. Furthermore, relaxing \cref{eq:nonanticipativity_constraint_EBSP} allows us to decompose the problem by scenario, thereby requiring significantly fewer computational resources. This leads to the cutting-plane algorithm we develop next. We begin with the following \emph{relaxation} of \ref{eq:exponential_reformulation_BSP}:
\begin{subequations}\label{subeq:relaxed_bilevel_stochastic_program}
\begin{align}
    \minimize_{\tilde{x},z,w} \quad & \sum_{s \in \cS} p_s \left(\phi_s^u (\tilde{x}_s,z_s) + (\lambda_s)^\top \left(\tilde{x}_s - \sum_{\hat{s} \in \cS} p_{\hat{s}} \tilde{x}_{\hat{s}}  \right) \right)  && \tag{\RBSP} \label{eq:relaxed_bilevel_stochastic_program}\\
    \text{subject to} \quad & (\tilde{x}_s,z_s) \in \Omega_s,  && s \in \cS, \label{eq:upper_lower_level_feasibility_RBSP} \\
    & h_{j,s}^{x,l} (\tilde{x}_s) \le M_{j,s} + \sum_{\mathbf{\hat{z}} \in \widehat{\Z}_{s}} (\gamma_{j,\hat{z}_s}^s-M_{j,s}) w_{\hat{z}_s,j}^s, && j \in[m^l], \label{eq:follower_blocked_solutions_EBSP} \\
    & \phi_s^l (\tilde{x}_s,z_s) \le \phi_s^l (\tilde{x}_s,\hat{z}_s) + \tilde{M}_{\hat{z}_s,s} \sum_{(\tilde{z},j) \in \B_s(\hat{z}_s,\widehat{\Z}_{s})} w_{\tilde{z}_s,j}^s, && \begin{array}{l} \hat{z}_s \in \widehat{\Z}_{s} , 
    s \in \cS, \end{array} \label{eq:bilevel_feasibility_cuts_EBSP} \\
    & w_{\hat{z}_s,j}^s \in \{0,1\}, && \begin{array}{l} j\in [m^l], 
    \hat{z}_s \in \widehat{\Z}_{s} , 
    s \in \cS.\end{array} \label{eq:binary_nature_w_EBSP}
\end{align}
\end{subequations}
\ref{eq:relaxed_bilevel_stochastic_program} is a subsystem of \ref{eq:exponential_reformulation_BSP} whose objective penalizes violations of the non-anticipativity constraint \cref{eq:nonanticipativity_constraint_EBSP}, with $\lambda_s \in \mathbb{R}^{n_1^u + n_2^u}$ denoting the corresponding Lagrange multipliers. Constraint \cref{eq:upper_lower_level_feasibility_RBSP} coincides with \cref{eq:upper_lower_level_feasibility}, while \eqref{eq:follower_blocked_solutions_EBSP}-\eqref{eq:bilevel_feasibility_cuts_EBSP} are relaxations of \eqref{eq:follower_blocked_solutions}-\eqref{eq:bilevel_feasibility_cuts_BSP} obtained by retaining only a subset $\widehat{\Z}_{s}$ of follower solutions, i.e., $\widehat{\Z}_{s} \subseteq \Z_s$. 

We use $\Z$ to denote the Cartesian product of all sets $\Z_s$, i.e., $\Z = \prod_{s\in \cS} \Z_s$, and analogously define $\widehat{\Z}=\prod_{s\in \cS}\widehat{\Z}_{s}$. Let $f^\star$, $f_e^\star$, and $f_r^\star(\widehat{\Z},\cS,\lambda)$ denote the optimal values of \ref{eq:bilevel_stochastic_program}, \ref{eq:exponential_reformulation_BSP}, and \ref{eq:relaxed_bilevel_stochastic_program}, respectively. Note that the optimal value of \ref{eq:relaxed_bilevel_stochastic_program} is parametrized by the follower feasible set $\widehat{\Z}$, the set of scenarios $\cS$, and the vector of Lagrange multipliers $\lambda = (\lambda_1,\ldots,\lambda_{\vert \cS \vert})$. The following result shows that \ref{eq:relaxed_bilevel_stochastic_program} is indeed a relaxation of  \ref{eq:bilevel_stochastic_program}, and therefore its objective value is a valid lower bound for $f^\star$.

\begin{proposition}\label{proposition:lower_bound}
    Let $\lambda \in \mathbb{R}^{\vert\cS\vert(n_1+n_2)}$ and $\widehat{\Z} \subseteq \Z$. Then $f_r^\star(\widehat{\Z},\cS,\lambda) \leq f^\star$.
\end{proposition}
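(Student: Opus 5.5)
The plan is to compare \ref{eq:relaxed_bilevel_stochastic_program} with \ref{eq:exponential_reformulation_BSP}, and then use \cref{proposition:EBSP_equivalence_BSP} to get $f_e^\star = f^\star$. It therefore suffices to show $f_r^\star(\widehat{\Z},\cS,\lambda) \le f_e^\star$. To do this, I take any feasible solution $(\tilde{x},z,w)$ of \ref{eq:exponential_reformulation_BSP} and map it to a feasible solution of \ref{eq:relaxed_bilevel_stochastic_program} whose objective value is the same. If \ref{eq:exponential_reformulation_BSP} is infeasible, then $f^\star=+\infty$ and there is nothing to prove. Otherwise, by \cref{assumption:Omega_bounded} and finiteness of $\X$, an optimum is attained, or at least we can argue with an infimizing sequence.

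\textbf{Step 1 (feasibility).} Given $(\tilde{x},z,w)$ feasible for \ref{eq:exponential_reformulation_BSP}, keep $\tilde{x}$ and $z$, and keep $w^s_{\hat z_s,j}$ only for $\hat z_s \in \widehat{\Z}_s \subseteq \Z_s$, dropping the rest. Constraint \cref{eq:upper_lower_level_feasibility_RBSP} is identical to \cref{eq:upper_lower_level_feasibility}.

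For \cref{eq:bilevel_feasibility_cuts_EBSP}, each $\hat z_s\in\widehat{\Z}_s$ appears in \cref{eq:bilevel_feasibility_cuts_BSP}. The sum there runs over $\B_s(\hat z_s,\Z_s)\supseteq \B_s(\hat z_s,\widehat{\Z}_s)$, so dropping terms might seem to tighten the constraint. This step needs care, and I expect it to be the main obstacle. I would resolve it by first modifying $w$ so that $w^s_{\tilde z,j}=1$ only when $x$ genuinely blocks $\tilde z$ via constraint $j$. This choice is admissible in \cref{eq:follower_blocked_solutions} with a valid big-$M$. Then, if some $(\tilde z,j)\in\B_s(\hat z_s,\Z_s)$ has $w=1$, $x$ blocks $\hat z_s$ itself via $j$, because $\gamma^s_{j,\hat z_s}\ge\gamma^s_{j,\tilde z}$. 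In that case one can set $w^s_{\hat z_s,j}=1$, and since $(\hat z_s,j)\in\B_s(\hat z_s,\widehat{\Z}_s)$ the reduced sum is still positive. If instead no such pair has $w=1$, both sums vanish, the constraint is the same, and it holds.

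For \cref{eq:follower_blocked_solutions_EBSP}, observe the following. If all $w^s_{\cdot,j}=0$, the right-hand side is $M_{j,s}$, which is valid by the choice of big-$M$. If exactly one is $1$, the right-hand side is $\gamma$ of a blocked solution, which holds by blocking. So I would normalize $w$ so that for each $(j,s)$ at most one $w$ equals $1$; this is the standard convention of \citet{lozano2017avaluefunction}. Under that convention, restricting to $\widehat{\Z}_s$ only removes terms with $\gamma-M\le0$, so the right-hand side can only increase and the constraint stays satisfied. In the case above, where $\hat z_s$'s flag must be switched on, I would switch off the dominated flag instead.

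\textbf{Step 2 (objective).} By \cref{eq:nonanticipativity_constraint_EBSP}, $\tilde{x}_s-\sum_{\hat s}p_{\hat s}\tilde{x}_{\hat s}=0$ for every $s$. Hence the Lagrangian term vanishes for any $\lambda$, and the \ref{eq:relaxed_bilevel_stochastic_program} objective equals $\sum_s p_s\phi^u_s(\tilde x_s,z_s)$.

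\textbf{Conclusion.} Taking the infimum over feasible points gives $f_r^\star(\widehat{\Z},\cS,\lambda)\le f_e^\star=f^\star$.
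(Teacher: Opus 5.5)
Your proposal is correct and follows the paper's route. The paper also evaluates \ref{eq:relaxed_bilevel_stochastic_program} at an optimal point of \ref{eq:exponential_reformulation_BSP}, notes that the multiplier term vanishes by \eqref{eq:nonanticipativity_constraint_EBSP}, and concludes with \cref{proposition:EBSP_equivalence_BSP}. The one difference is that you check the feasibility transfer explicitly, whereas the paper takes it for granted by calling \ref{eq:relaxed_bilevel_stochastic_program} a subsystem. You are right that simply discarding the $w$'s outside $\widehat{\Z}_s$ can tighten \eqref{eq:bilevel_feasibility_cuts_EBSP}.

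Your flag-switching repair is sound, but "switch off the dominated flag" needs a precise rule when several $\hat z_s\in\widehat{\Z}_s$ need deactivating via the same $j$. For each $(j,s)$, set $w^s_{\hat z_s,j}=1$ only for one $\hat z_s\in\widehat{\Z}_s$ of smallest $\gamma^s_{j,\hat z_s}$ among those blocked by $\tilde x_s$ via $j$, and set every other $w$ to $0$. This keeps at most one flag per $(j,s)$, so \eqref{eq:follower_blocked_solutions_EBSP} holds. Every other $\hat z'_s$ blocked via $j$ has $\gamma^s_{j,\hat z'_s}\ge\gamma^s_{j,\hat z_s}$, so that flag lies in $\B_s(\hat z'_s,\widehat{\Z}_s)$ and deactivates its cut. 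The cuts for unblocked responses hold because $z_s$ is an optimal follower response to $\tilde x_s$.
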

\begin{proof}
    Suppose $(\tilde{x}^\star,z^\star)$ is an optimal solution to \ref{eq:exponential_reformulation_BSP}. Then,
    \begin{align*}
        f_r^\star(\widehat{\Z},\cS,\lambda)  & =  \min_{\tilde{x},z \colon \eqref{subeq:relaxed_bilevel_stochastic_program}} \sum_{s \in \cS} p_s \left(\phi_s^u (\tilde{x}_s,z_s) + (\lambda_s)^\top \left(\tilde{x}_s - \sum_{\hat{s} \in \cS} p_{\hat{s}} \tilde{x}_{\hat{s}}  \right) \right) \\
        & \le \sum_{s \in \cS} p_s \left(\phi_s^u (\tilde{x}_s^{\star}, z_s^{\star}) + (\lambda_s)^\top \left(\tilde{x}_s^{\star} - \sum_{\hat{s} \in \cS} p_{\hat{s}} \tilde{x}_{\hat{s}}^{\star}  \right) \right) \\
        & = \sum_{s \in \cS} p_s \left(\phi_s^u (\tilde{x}_s^{\star}, z_s^{\star}) \right) = f_e^\star.
    \end{align*}
    By \cref{proposition:EBSP_equivalence_BSP}, the result $f_r^\star(\widehat{\Z},\cS,\lambda) \le f^\star$ follows. 
\end{proof}

\subsection{Stochastic Subgradient Cutting Plane Algorithm}\label{subsec:CPM_exact_alg}

Our Stochastic Subgradient Cutting Plane algorithm, which we refer to as \cref{alg:SSCP}, combines stochastic subgradient updates with cut generation. At the beginning of each iteration, given the current multipliers $\lambda_s$ and the sets of follower responses $\widehat{\Z}_{s}$, we first solve \ref{eq:relaxed_bilevel_stochastic_program} to obtain scenario-wise solutions $(\hat{x}_{s}, \hat{z}_{s})$ and update the lower bound $LB$. We then compute the subgradient $\hat{\chi}$ from \cref{lemma:subgradient_REBSP} and add it to a local subgradient bundle $\mathcal{B}$. A consensus rule (\cref{algstep:consensus}) selects a representative solution $x$ from these scenario-wise solutions. If $x$ is new ($x \notin \widehat{\X}$), it is added to $\widehat{\X}$ and we compute its follower best responses $\tilde{z}_{s}$ by solving the lower-level problem~\cref{eq:BSP_follower_problem} for each scenario. We then update the sets $\widehat{\Z}_{s}$ for the next iteration, tighten the upper bound $UB$, reset the subgradient step size $\eta$, and clear the bundle $\mathcal{B}$.

Otherwise, if $x$ has already been evaluated, we check for convergence. If $\bar{x} = x$, discrete consensus is achieved, $UB$ meets $LB$, and the optimal solution is found. If not, as is standard in Lagrangian relaxation for mixed-integer programs, the subgradient updates will eventually exhibit a \emph{tailing-off} effect, where the lower bound improvements stall while the discrete scenario solutions oscillate due to the nonconvex duality gap \citep{CAROE1999Dual,Guignard2003lagrangian}. Rather than relying on heuristic tolerance triggers, \cref{alg:SSCP} detects this stalling by evaluating a theoretical optimality certificate via a bundle method framework. It solves a restricted quadratic program to find the minimum-norm element in the convex hull of the historical subgradients tracked in $\mathcal{B}$. If this norm evaluates to zero ($0 \in \partial_\lambda f_r^\star(\widehat{\Z}, \cS, \lambda)$), it formally certifies that the dual updates have stalled at the optimum \citep{Lemarechal2001Lagrangian}. At this point, a primal exhaustion mechanism is invoked: the algorithm generates a no-good cut to explicitly exclude the current stalled solution from the leader's feasible set ($\X \gets \X \setminus \{x\}$), resets the step size, and clears the bundle $\mathcal{B}$, thereby resolving the remaining duality gap and forcing exact convergence. If the dual has not yet stalled, only the step size $\eta$ is updated. In all cases, the multipliers $\lambda_s$ are updated, and the process is repeated until the absolute optimality gap $(UB-LB)$ is below $\epsilon$ or consensus has been reached.

\begin{algorithm}[ht]\small
\caption{Stochastic Subgradient Cutting Plane Algorithm}\label{alg:SSCP}
\SetAlgoLined
    \KwResult{$\epsilon$-optimal solution for \ref{eq:bilevel_stochastic_program}}
    \KwIn{Feasible $x^0$, set of scenarios $\cS$, tolerance $\epsilon > 0$}
    \textbf{Initialize:} For each $s\in \cS$, solve the follower problem~\eqref{eq:BSP_follower_problem} parametrized by $x^0$ and use solution $z_s$ to construct feasible leader-follower pairs $(x^0, z_s)\in\Omega_s$\;
    Set $\lambda_s \gets 0$, $\widehat{\Z}_{s} \gets \{ z_s \}$, $x^\star \gets x^0$ and $\widehat{\X} \gets \{x^\star \}$\;
    Set $\eta \gets 1$, $\B \gets \emptyset$, $UB \gets \sum_{s \in \cS} p_s  \phi_s^u (x,z_s)$ and $LB \gets -\infty$\;
    \While{$UB-LB > \epsilon$\label{alg_step:while_condition}}{    
    Obtain $(\hat{x}_s,\hat{z}_s)$ by computing $f_r^\star(\widehat{\Z}_{s},\{s\},\lambda_s)$ for each $s \in \cS$\label{alg_step:solve_relaxation}\; 
    \If{$ LB < \sum_{s \in \cS} f_r^\star(\widehat{\Z}_{s},\{s\},\lambda_s) $}{
	$LB\gets \sum_{s \in \cS} f_r^\star(\widehat{\Z}_{s},\{s\},\lambda_s)$\;
    }
    Compute $\bar{x} = \sum_{s \in \cS} p_s \hat{x}_s$ \;  
    Compute subgradient $\hat{\chi}$ using \cref{lemma:subgradient_REBSP}, and update bundle $\mathcal{B} \gets \mathcal{B} \cup \{\hat{\chi}\}$\;
    Obtain $s^\star = \displaystyle \arg\min_{s \in \cS} \sum_{s' \in \cS \setminus \{s\}} \| \hat{x}_s - \hat{x}_{s'} \|_1 $, and set $x \gets \hat{x}_{s^{\star}}$\label{algstep:consensus}\;
        \uIf{$x \notin \widehat{\X}$ \label{algstep:CPM_non_recorded_leader_sol} }{
            $\widehat{\X} \gets \widehat{\X} \cup \{x\}$\; 
            For each $s \in \cS$, compute $\tilde{z}_s$ by solving the follower problem~\eqref{eq:BSP_follower_problem} parametrized by $x$\label{algstep:optimal_follower_response}\;
            \If{$\phi_s^l (x,\tilde{z}_s)<\phi_s^l (x,\hat{z}_s)$ for any $s \in \cS$}{ 
                $\pi^\star \gets 0$\;
                \For{$s \in \cS$}{
                    \uIf{$\phi_s^l (x,\tilde{z}_s)<\phi_s^l (x,\hat{z}_s)$}{
                        $\widehat{\Z}_{s} \gets \widehat{\Z}_{s} \cup \{ \tilde{z}_s \}$\;
                        $\pi^\star \gets \pi^\star + p_s \phi_s^u (x,\tilde{z}_s)$\;
                    }
                    \Else{
                        $\pi^\star \gets \pi^\star +p_s \phi_s^u (x,\hat{z}_s)$\;
                    }
                }
                \If{$UB > \pi^\star$}{
                    $UB \gets \pi^\star$ and $x^\star \gets x$\;
                }
            }
            Reset step size $\eta \gets 1$ and clear bundle $\mathcal{B} \gets \emptyset$ \label{algstep:reset_eta}\;
        }\uElseIf{$\bar{x}=x$}{
                $x^\star \gets x$ and break\label{algstep:set_bilevel_optimal_sol_2}\;
            }
            \uElseIf{$\displaystyle \min_{\alpha \ge 0, \sum \alpha_{k=1}^{\vert \B \vert} = 1} \left\| \sum_{\hat{\chi}^k \in \mathcal{B}} \alpha_k \hat{\chi}^k \right\|_2^2 = 0$}{
                Add a no-good cut to exclude the current solution $x$: $\X \gets \X \setminus \{x\}$\label{algstep:add_no_good_cut}\;
                Reset step size $\eta \gets 1$ and clear bundle $\mathcal{B} \gets \emptyset$\;
            }
            \Else{
                Update step size $\eta \gets \eta / (1 + \eta)$ \label{algstep:update_eta}\;
            }
        Update $\lambda_s \gets \lambda_s + \eta (x - \bar{x})$ \label{algstep:update_lambda}\;
    }
    \textbf{Return:} ${x}^\star$\;
\end{algorithm}

\section{Proof of the Main Result}\label{sec:main_results}

To prove \cref{theorem:correctness}, we first show that there exists a vector of Lagrange multipliers $\lambda$ such that the corresponding relaxation satisfies the non-anticipativity constraint and attains the optimal value of~\ref{eq:bilevel_stochastic_program}.

\begin{proposition}\label{proposition:non_anticipativity_convergence}
    
Let $LB = \max_{\lambda} f_r^\star(\widehat{\Z},\cS,\lambda)$ denote the optimal value of the Lagrangian dual of \ref{eq:relaxed_bilevel_stochastic_program}. Then $LB$ is equal to the optimal value of the convex relaxation of \ref{eq:exponential_reformulation_BSP} with the non-anticipativity constraint explicitly enforced, and hence $LB \leq  f_e^\star$.
\end{proposition}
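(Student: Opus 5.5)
The plan is to prove the statement with the classical Geoffrion-type argument for Lagrangian relaxation of linking constraints. For each $s\in\cS$, let $F_s$ be the set of $(\tilde{x}_s,z_s,w^s)$ satisfying \eqref{eq:upper_lower_level_feasibility_RBSP}--\eqref{eq:binary_nature_w_EBSP} for scenario $s$. By construction these constraints do not couple different scenarios, so the only linking constraint is the non-anticipativity constraint \eqref{eq:nonanticipativity_constraint_EBSP}. First I will rewrite the penalty term:
\begin{equation*}
\sum_{s} p_s \lambda_s^\top\Bigl(\tilde{x}_s-\sum_{\hat{s}} p_{\hat{s}}\tilde{x}_{\hat{s}}\Bigr)=\sum_s p_s \mu_s^\top \tilde{x}_s,
\qquad \mu_s=\lambda_s-\sum_{\hat{s}} p_{\hat{s}}\lambda_{\hat{s}}.
\end{equation*}
Hence $f_r^\star(\widehat{\Z},\cS,\lambda)=\sum_s p_s \min_{F_s}\{\phi_s^u(\tilde{x}_s,z_s)+\mu_s^\top\tilde{x}_s\}$. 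The multipliers $\mu$ obtained this way are exactly the vectors satisfying $\sum_s p_s\mu_s=0$, and this set is the orthogonal complement (in the $p$-weighted inner product) of the consensus subspace $\{\tilde{x}_1=\dots=\tilde{x}_{|\cS|}\}$.

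Next, for each scenario I will define the lifted set
\begin{equation*}
G_s=\{(\tilde{x}_s,t):\exists (\tilde{x}_s,z_s,w^s)\in F_s,\ t\ge \phi_s^u(\tilde{x}_s,z_s)\}.
\end{equation*}
Each scenario subproblem minimizes the linear function $t+\mu_s^\top\tilde{x}_s$ over $G_s$. A linear function has the same minimum over $G_s$ as over $\overline{\mathrm{conv}}(G_s)$, so
\begin{equation*}
f_r^\star(\widehat{\Z},\cS,\lambda)=\min\Bigl\{\sum_s p_s(t_s+\mu_s^\top\tilde{x}_s):(\tilde{x}_s,t_s)\in\overline{\mathrm{conv}}(G_s),\ s\in\cS\Bigr\}.
\end{equation*}
This expression is a Lagrangian of the convex program
\begin{equation*}
\min\Bigl\{\sum_s p_s t_s : (\tilde{x}_s,t_s)\in\overline{\mathrm{conv}}(G_s),\ \tilde{x}_s=\textstyle\sum_{\hat{s}}p_{\hat{s}}\tilde{x}_{\hat{s}}\ \ \forall s\Bigr\},
\end{equation*}
which I take as the definition of the convex relaxation of \ref{eq:exponential_reformulation_BSP} with non-anticipativity enforced (restricted to $\widehat{\Z}$).

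The key step, and the main obstacle, is strong duality for this convex program, i.e.\ exchanging $\max_\lambda$ and $\min$. I will get compactness from \cref{assumption:Omega_bounded} and the finiteness of $\X$: the $\tilde{x}_s$-components of $G_s$ range over the finite set $\X$, $z_s$ ranges over a bounded set, and $w^s$ is binary. So, after capping $t$ by an upper bound on $\phi_s^u$ over the bounded $\Omega_s$ (assuming $\phi_s^u$ is continuous, as in the linear case), the sets $\overline{\mathrm{conv}}(G_s)$ can be taken compact and convex. The Lagrangian is bilinear in $(\mu,(\tilde{x},t))$ and linear in $\mu$ over an unbounded subspace. I would therefore first try Sion's minimax theorem on the compact primal side, and then use the standard fact that for linear equality constraints over a compact convex set, $\sup_\mu\min$ equals the constrained minimum. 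If one treats $\infty$ as the value of an infeasible consensus, this supremum is $+\infty$ when no consensus point exists. When $\phi_s^u$ and $h$ are linear, the hulls are polytopes, and LP duality gives the same conclusion directly with attainment of the maximum over $\lambda$.

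Finally, the inequality $LB\le f_e^\star$ follows because an optimal solution $(\tilde{x}^\star,z^\star,w^\star)$ of \ref{eq:exponential_reformulation_BSP} is feasible for the convexified program. I restrict $w^\star$ to the indices in $\widehat{\Z}_s$; constraints \eqref{eq:follower_blocked_solutions_EBSP}--\eqref{eq:bilevel_feasibility_cuts_EBSP} are subsets of \eqref{eq:follower_blocked_solutions}--\eqref{eq:bilevel_feasibility_cuts_BSP}, so each $(\tilde{x}^\star_s,\phi_s^u(\tilde{x}^\star_s,z^\star_s))\in G_s$, and $\tilde{x}^\star$ satisfies non-anticipativity. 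Its objective value is $f_e^\star$, which gives $LB\le f_e^\star$. This is consistent with \cref{proposition:lower_bound}, which already yields the same bound for every fixed $\lambda$.
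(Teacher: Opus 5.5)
Your proposal is correct and follows the same route as the paper. Both arguments pass to the convex hull of the scenario-separable feasible set, interchange $\max_\lambda$ and $\min$ via Sion's theorem, and use that the supremum over multipliers is $+\infty$ off the consensus subspace. Your epigraph sets $G_s$, the capping of $t$, and the duality-attainment remark make this more rigorous than the paper's version, which relies on $\phi_s^u$ being linear and does not address attainment of the maximum. In fact, since $\tilde{x}_s$ ranges over the finite set $\X$, $\overline{\mathrm{conv}}(G_s)$ is always a polyhedron, so LP duality gives attainment even without linearity.

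One step in your last paragraph needs a small repair. Restricting $w^\star$ to $\widehat{\Z}_s$ can violate \eqref{eq:bilevel_feasibility_cuts_EBSP}, because $\B_s(\hat{z}_s,\widehat{\Z}_s)\subseteq\B_s(\hat{z}_s,\Z_s)$ may drop the only active indicator that switches off the cut of a blocked $\hat{z}_s\in\widehat{\Z}_s$. Instead, rebuild $w$: for each $j$, activate only one $\tilde{z}_s\in\widehat{\Z}_s$ with $h_{j,s}^{x,l}(\tilde{x}_s^\star)\le\gamma_{j,\tilde{z}_s}^s$, choosing one with smallest $\gamma_{j,\tilde{z}_s}^s$. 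Follower optimality of $z_s^\star$ then handles the cuts of the unblocked responses.
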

\begin{proof}
    Define the discrete feasible set of the relaxed reformulation as $\overline{\Omega} = \{ (\tilde{x},z) \colon \eqref{subeq:relaxed_bilevel_stochastic_program} \}.$ 
    By weak duality, the maximum of the Lagrangian relaxation provides a valid lower bound for the primal problem. 
    Furthermore, by \cref{assumption:Omega_bounded}, $\overline{\Omega}$ is bounded. 
    Let $\text{conv}(\overline{\Omega})$ denote the convex hull of $\overline{\Omega}$. 
    Because the objective function is linear in both the decision variables and the multipliers, we can apply Sion's minimax theorem \citep{sion1958general} to the convexified space to interchange the operators:
    \begin{align}\label{eq:interchanged_max_min_lagrangian_problem}
        \max_{\lambda}{ f_r^\star(\widehat{\Z},\cS,\lambda) } & = \max_{\lambda} \left\{ \min_{(\tilde{x},z) \in \overline{\Omega}} \sum_{s \in \cS} p_s \left(\phi_s^u (\tilde{x}_s,z_s) + (\lambda_s)^\top \left(\tilde{x}_s - \sum_{\hat{s} \in \cS} p_{\hat{s}} \tilde{x}_{\hat{s}}  \right) \right) \right\} \nonumber \\
        & = \max_{\lambda} \left\{ \min_{(\tilde{x},z) \in \text{conv}(\overline{\Omega})} \sum_{s \in \cS} p_s \left(\phi_s^u (\tilde{x}_s,z_s) + (\lambda_s)^\top \left(\tilde{x}_s - \sum_{\hat{s} \in \cS} p_{\hat{s}} \tilde{x}_{\hat{s}}  \right) \right) \right\} \nonumber \\
        & = \min_{(\tilde{x},z) \in \text{conv}(\overline{\Omega})} \left\{ \max_{\lambda} \sum_{s \in \cS} p_s \left(\phi_s^u (\tilde{x}_s,z_s) + (\lambda_s)^\top \left(\tilde{x}_s - \sum_{\hat{s} \in \cS} p_{\hat{s}} \tilde{x}_{\hat{s}}  \right) \right) \right\}.
    \end{align}
If the non-anticipativity constraint $\tilde{x}_s - \sum_{\hat{s} \in \cS} p_{\hat{s}} \tilde{x}_{\hat{s}} = 0$ is violated for any $s \in \cS$ in the convexified space, the inner maximization over $\lambda$ diverges to $+\infty$. 
Consequently, any finite optimal solution to the right side of \cref{eq:interchanged_max_min_lagrangian_problem} must satisfy non-anticipativity in $\text{conv}(\overline{\Omega})$. Thus, solving the Lagrangian dual yields a lower bound on the convexified primal problem over $\text{conv}(\overline{\Omega})$ subject to non-anticipativity. 
\end{proof}

\cref{proposition:non_anticipativity_convergence} implies that, for an appropriate choice of optimal multipliers $\lambda^\star$ and a sufficient number of follower responses in \cref{eq:bilevel_feasibility_cuts_BSP}, solving \ref{eq:relaxed_bilevel_stochastic_program} solves the convex relaxation of \ref{eq:exponential_reformulation_BSP}. Specifically, due to the presence of discrete decision variables, evaluating the Lagrangian dual at $\lambda^\star$ yields the optimal value of the problem over the convex hull of the scenario-wise feasible sets. In other words, the non-anticipativity constraint is enforced over a convex combination of optimal scenario-wise discrete solutions, rather than on a single discrete consensus point.

A vector $u \in\mathbb{R}^{n}$ is a \emph{subgradient} of a concave function $g(x):\mathbb{R}^{n} \to \mathbb{R}$ at $x_0$, if for all $x \in \mathbb{R}^{n}$ 
\begin{equation*}
    g(x) \leq g(x_0) + u^\top (x - x_0).
\end{equation*} 

Let $\bar{x} = \sum_{s \in \cS} p_s \hat{x}_s$. Then, we have the following result:
\begin{lemma}\label{lemma:subgradient_REBSP}
Let $(\hat{x},\hat{z})$ be an optimal solution to \ref{eq:relaxed_bilevel_stochastic_program} with parameters $(\widehat{\Z},\cS,\hat{\lambda})$. Then, a subgradient of $f_r^\star(\widehat{\Z},\cS,\lambda)$ at $\hat{\lambda}$ is given by     
    \begin{align}
        \hat{\chi} &= \left(p_1 \hat{x}_1 - p_1\sum_{\hat{s} \in \cS} p_{\hat{s}} \hat{x}_{\hat{s}}, \ldots, p_{|\cS|} \hat{x}_{|\cS|} - p_{|\cS|}\sum_{\hat{s} \in \cS} p_{\hat{s}} \hat{x}_{\hat{s}} \right) \nonumber\\
        &= (p_1(\hat{x}_1 - \bar{x}),\ldots, p_{|\cS|}(\hat{x}_{|\cS|} - \bar{x})).\nonumber
    \end{align}
\hfill\end{lemma}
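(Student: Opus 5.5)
The plan is the standard argument that the dual function of a Lagrangian relaxation is a pointwise minimum of functions affine in $\lambda$, so the constraint residual at a minimizer is a supergradient. (The paper calls this a ``subgradient'' of the concave function, matching the definition just given.)

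\textbf{Step 1: the feasible set does not depend on $\lambda$.} Write $\overline{\Omega}$ for the feasible set of \ref{eq:relaxed_bilevel_stochastic_program}, determined by \eqref{eq:upper_lower_level_feasibility_RBSP}--\eqref{eq:binary_nature_w_EBSP} with $\widehat{\Z}$ fixed. None of these constraints involves $\lambda$. For each $(\tilde{x},z)\in\overline{\Omega}$, define
\[
L(\tilde{x},z;\lambda)=\sum_{s\in\cS}p_s\phi_s^u(\tilde{x}_s,z_s)+\sum_{s\in\cS}p_s\lambda_s^\top\Bigl(\tilde{x}_s-\sum_{\hat{s}\in\cS}p_{\hat{s}}\tilde{x}_{\hat{s}}\Bigr).
\]
This is affine in $\lambda$. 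Its gradient with respect to $\lambda_s$ is $p_s(\tilde{x}_s-\sum_{\hat{s}}p_{\hat{s}}\tilde{x}_{\hat{s}})$. By definition, $f_r^\star(\widehat{\Z},\cS,\lambda)=\min_{(\tilde{x},z)\in\overline{\Omega}}L(\tilde{x},z;\lambda)$.

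\textbf{Step 2: the minimum is attained and finite.} This is the only point needing care. By \cref{assumption:Omega_bounded}, the relevant sets are bounded. The $x$-components lie in the finite set $\X$, and the $w$ variables are binary. Under the implicit standing assumption that the objective is continuous (or that the relevant discrete set is finite), the minimum is attained. The hypothesis already supplies an optimal solution $(\hat{x},\hat{z})$ at $\hat{\lambda}$, so we only need $f_r^\star$ to be finite, which holds because it equals $L(\hat{x},\hat{z};\hat{\lambda})$.

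\textbf{Step 3: the supergradient inequality.} Fix any $\lambda$. Since $(\hat{x},\hat{z})\in\overline{\Omega}$, we have
\[
f_r^\star(\widehat{\Z},\cS,\lambda)\le L(\hat{x},\hat{z};\lambda)=L(\hat{x},\hat{z};\hat{\lambda})+\sum_{s\in\cS}p_s(\lambda_s-\hat{\lambda}_s)^\top(\hat{x}_s-\bar{x}).
\]
The equality holds because $L$ is affine in $\lambda$: the $\phi^u$ terms cancel, and the remaining difference is exactly linear. Optimality of $(\hat{x},\hat{z})$ at $\hat{\lambda}$ gives $L(\hat{x},\hat{z};\hat{\lambda})=f_r^\star(\widehat{\Z},\cS,\hat{\lambda})$. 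Therefore
\[
f_r^\star(\widehat{\Z},\cS,\lambda)\le f_r^\star(\widehat{\Z},\cS,\hat{\lambda})+\hat{\chi}^\top(\lambda-\hat{\lambda}),
\]
where $\hat{\chi}=(p_1(\hat{x}_1-\bar{x}),\ldots,p_{|\cS|}(\hat{x}_{|\cS|}-\bar{x}))$.

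\textbf{Step 4: identify the two forms of $\hat{\chi}$.} Expanding $\bar{x}=\sum_{\hat{s}}p_{\hat{s}}\hat{x}_{\hat{s}}$ shows that the two displayed expressions for $\hat{\chi}$ coincide.

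\textbf{Concavity.} Concavity of $f_r^\star$ in $\lambda$, which the definition presupposes, follows from its being a minimum of affine functions.

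\textbf{Scenario-wise variant.} If one wants this for the decomposed version used in \cref{alg:SSCP}, where each $f_r^\star(\widehat{\Z}_s,\{s\},\lambda_s)$ is computed separately, the same argument applies per block. The coupling term $\sum_{\hat{s}}p_{\hat{s}}\tilde{x}_{\hat{s}}$ is then taken at the current solutions, which yields the same $\hat{\chi}$.
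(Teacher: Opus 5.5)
Your proposal is correct and follows the same idea as the paper: $f_r^\star(\widehat{\Z},\cS,\lambda)$ is a pointwise minimum of functions affine in $\lambda$, and the gradient $\hat{\chi}$ of the piece active at $\hat{\lambda}$ is a supergradient. Your Step 3 makes rigorous what the paper only asserts. The paper writes $\partial f_r^\star/\partial\lambda = \hat{\chi}$ and appeals to concavity, which tacitly presumes differentiability at $\hat{\lambda}$; your direct bound by the Lagrangian at the fixed minimizer $(\hat{x},\hat{z})$ needs neither differentiability nor concavity.
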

\begin{proof}
    The statement follows from the fact that the function
    \begin{align*}
        f_r^\star(\widehat{\Z},\cS,\lambda)  & =  \min_{\tilde{x},z \colon \eqref{subeq:relaxed_bilevel_stochastic_program}} \left\{ \sum_{s \in \cS} p_s \left(\phi_s^u (\tilde{x}_s,z_s) + (\lambda_s)^\top \left(\tilde{x}_s - \sum_{\hat{s} \in \cS} p_{\hat{s}} \tilde{x}_{\hat{s}}  \right) \right) \right\} \\
        & = \sum_{s \in \cS} p_s \left(\phi_s^u (\hat{x}_s,\hat{z}_s) + (\lambda_s)^\top \left(\hat{x}_s - \sum_{\hat{s} \in \cS} p_{\hat{s}} \hat{x}_{\hat{s}}  \right) \right)
    \end{align*}
    is concave in $\lambda$; see \cite{boyd_vandenberghe2004convex}. We also note that $\partial f_r^\star(\widehat{\Z},\cS,\lambda) / \partial\lambda =  \hat{\chi}$. By concavity of $f_r^\star$, it follows that at $\hat{\lambda}$ we have:
    \begin{equation*}
        f_r^\star(\widehat{\Z},\cS,\lambda) \leq f_r^\star(\widehat{\Z},\cS,\hat{\lambda}) + \hat{\chi}^\top (\lambda - \hat{\lambda}).
    \end{equation*}
\end{proof}

Because the Lagrangian dual function is non-smooth, optimal multipliers are theoretically identified when the zero vector belongs to the subdifferential, i.e., $0 \in \partial_\lambda f_r^\star(\widehat{\Z},\cS,\lambda^\star)$ \citep{Rockafellar1970ConvexAnalysis}. As detailed in \cref{subsec:CPM_exact_alg}, \cref{alg:SSCP} actively monitors this exact mathematical certificate to detect when the multiplier updates have stalled at the dual optimum \citep{Lemarechal2001Lagrangian}. Recognizing this stall is precisely what triggers the primal exhaustion mechanism, enabling the algorithm to overcome the nonconvex duality gap and proceed toward an exact consensus.

\begin{theorem}\label{theorem:convergence}
Let $LB^\nu$ denote the lower bound obtained by \cref{alg:SSCP} at iteration $\nu$, i.e., 
\[
LB^\nu = \max_{k\le \nu} \sum_{s \in \cS} f_r^\star(\widehat{\Z}_s^{k},\{s\},\lambda_s^{k}).
\] 
Then 
\[
LB^\nu \rightarrow f_r^\star(\widehat{\Z}^{\nu_0},\cS,\lambda^{\star}) \quad \text{as} \quad \nu\rightarrow\infty,
\] 
where $\lambda^{\star} \in \argmax_{\lambda} f_r^\star(\widehat{\Z}^{\nu_0},\cS,\lambda)$ and $\nu_0$ is the iteration number, at which set $\widehat{\X}$ in \cref{alg:SSCP} stops changing.
\end{theorem}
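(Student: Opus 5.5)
The argument splits at iteration $\nu_0$ and then treats the remaining run as a standard subgradient method on a fixed concave dual function.

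\emph{Step 1: the sets stop changing.} Because $\X$ is finite (\cref{assumption:Omega_bounded}), $\widehat{\X}$ can grow only finitely often, so $\nu_0$ is well defined. The set $\widehat{\Z}_s$ is enlarged only inside the branch $x\notin\widehat{\X}$, so $\widehat{\Z}^{\nu}=\widehat{\Z}^{\nu_0}$ for all $\nu\ge\nu_0$.

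\emph{Step 2: reduce to a fixed dual function.} For $\nu\ge\nu_0$, note that $\sum_{s} f_r^\star(\widehat{\Z}_s,\{s\},\lambda_s)$ is separable in $s$ once the coupling term is rewritten. Using $\sum_s p_s\lambda_s^\top\sum_{\hat s}p_{\hat s}\tilde x_{\hat s}=\sum_{\hat s}p_{\hat s}\bar\lambda^\top\tilde x_{\hat s}$ with $\bar\lambda=\sum_s p_s\lambda_s$, this sum equals $f_r^\star(\widehat{\Z}^{\nu_0},\cS,\lambda)$ up to the convention of which multiplier each scenario block carries. So the quantities the algorithm evaluates are values of the single concave function $g(\lambda):=f_r^\star(\widehat{\Z}^{\nu_0},\cS,\lambda)$. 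By \cref{lemma:subgradient_REBSP}, $\hat\chi$ is a subgradient of $g$ at the current iterate. The update in \cref{algstep:update_lambda} moves $\lambda_s$ along $x-\bar x$; I would need to check that this is an ascent direction comparable to $\hat\chi_s$, for example by writing $x-\bar x=(\hat x_s-\bar x)+(x-\hat x_s)$ and treating the extra term as a bounded perturbation. This is a weak point of the plan.

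\emph{Step 3: step sizes and the subgradient bound.} Since $\widehat{\X}$ no longer changes, the resets in \cref{algstep:reset_eta} stop after $\nu_0$. The resets caused by no-good cuts can happen only finitely often because $\X$ is finite; removing points from $\X$ changes $\overline{\Omega}$, and I would fold this into the definition of $\nu_0$. After the last reset, \cref{algstep:update_eta} gives $\eta_k=1/(k+c)$, so $\sum_k\eta_k=\infty$ and $\sum_k\eta_k^2<\infty$. Subgradients are uniformly bounded, $\|\hat\chi\|\le G$, because $\overline{\Omega}$ is bounded by \cref{assumption:Omega_bounded}.

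\emph{Step 4: the convergence estimate.} Apply the classical estimate: for any $\lambda^\star\in\argmax g$, which exists by \cref{proposition:non_anticipativity_convergence}, since the dual attains the bounded convexified primal value,
\[
\min_{k\le\nu}\bigl(g(\lambda^\star)-g(\lambda^k)\bigr)\le\frac{\|\lambda^{\nu_1}-\lambda^\star\|^2+G^2\sum_{k}\eta_k^2}{2\sum_{k=\nu_1}^{\nu}\eta_k}\to0 .
\]
Since $LB^\nu$ is a running maximum of $g(\lambda^k)$ and $g(\lambda^k)\le g(\lambda^\star)$ always holds, $LB^\nu$ increases monotonically and converges to $g(\lambda^\star)$.

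\emph{Main obstacle.} The hardest step is Step 2. I must reconcile the scenario-wise per-block values and the update direction $x-\bar x$, where $x$ is the consensus point and not $\hat x_s$, with a genuine subgradient step on $g$. If the direction cannot be shown to be a true subgradient, I would fall back on an $\varepsilon$-subgradient argument: bound $\|x-\hat x_s\|$ by the diameter of $\X$ and show that its contribution vanishes under the square-summable step sizes.
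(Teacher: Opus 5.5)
Your skeleton is the paper's: freeze $\widehat{\Z}$ after $\nu_0$, treat the rest of the run as a diminishing-step subgradient method on $g(\lambda)=f_r^\star(\widehat{\Z}^{\nu_0},\cS,\lambda)$ with harmonic steps, and conclude from $\sum_k\eta_k=\infty$ and $\sum_k\eta_k^2<\infty$. However, the weak point you flag in Step 2 is a genuine gap, and your fallback does not close it.

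Write the step as $\hat x_s-\bar x+e_s$ with $e_s=x-\hat x_s$. The error then enters the expansion of $\sum_s p_s\|\lambda_s^{k+1}-\lambda_s^\star\|_2^2$ through the cross term $2\eta_k\sum_s p_s e_s^\top(\lambda_s^k-\lambda_s^\star)$. This term has three problems:
\begin{enumerate}
\item it is first order in $\eta_k$;
\item $e_s$ is merely bounded by $\mathrm{diam}(\X)$ and does not vanish;
\item it is multiplied by $\|\lambda^k-\lambda^\star\|$, for which you have no bound once the recursion is perturbed.
\end{enumerate}
Square-summability of $\eta_k$ only controls the $\eta_k^2$ term.

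In fact no argument can rescue the literal update. The direction $x-\bar x$ does not depend on $s$, and adding a common vector $\mu$ to every $\lambda_s$ leaves the Lagrangian unchanged, because $\sum_s p_s\bigl(\tilde x_s-\sum_{\hat s}p_{\hat s}\tilde x_{\hat s}\bigr)=0$ when the $p_s$ sum to one. Since $\lambda^0=0$, the literal iterates satisfy $\lambda_s^k=\mu^k$ for all $s$. Hence $g(\lambda^k)=g(0)$ for every $k$, and $LB^\nu$ cannot reach $\max_\lambda g$ unless $0$ is already dual optimal.

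The missing idea is to step along each scenario's own relaxed solution, which is what the paper's proof does. It writes $\lambda_s^{\nu}=\lambda_s^{\nu-1}+\eta^{\nu}(x_s^{\nu}-\bar x^{\nu})=\lambda_s^{\nu-1}+\eta^\nu\hat\chi_s/p_s$ and measures distance in the $p$-weighted norm $\sum_s p_s\|\lambda_s-\lambda_s^\star\|_2^2$. In that metric the cross term is exactly $2\eta^\nu\hat\chi^\top(\lambda^{\nu-1}-\lambda^\star)\le 2\eta^\nu\bigl(g(\lambda^{\nu-1})-g(\lambda^\star)\bigr)$ by \cref{lemma:subgradient_REBSP}, and your Step 4 then goes through. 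The weighting matters when the $p_s$ are not uniform, since $\hat x_s-\bar x$ is $\hat\chi_s$ rescaled by $1/p_s$, so your unweighted classical estimate would not apply directly.

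Two smaller remarks. First, the terms of $LB^\nu$ with $k<\nu_0$ are values of a different function. To get $LB^\nu\le g(\lambda^\star)$ you need the monotonicity $f_r^\star(\widehat{\Z}^k,\cS,\lambda)\le f_r^\star(\widehat{\Z}^{\nu_0},\cS,\lambda)$, which the paper invokes explicitly. Second, your decision to absorb the no-good-cut resets into $\nu_0$ is more careful than the paper. The paper's claim that $\eta^\nu=1/(1+\nu-\nu_0)$ for all $\nu\ge\nu_0$ ignores the step-size reset, and the shrinking of $\X$, at line~\ref{algstep:add_no_good_cut}.
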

\begin{proof}
Since $\widehat{\X} \subseteq \X$ and set $\X$ is finite, $\widehat{\X}$ must also be finite. Hence, after a finite number of iterations (denoted by $\nu_0$) in \cref{alg:SSCP}, set $\widehat{\X}$ stops changing. 
    
    Let $\lambda^{\star} \in \argmax_{\lambda} f_r^\star(\widehat{\Z}^{\nu_0},\cS,\lambda)$. We can establish the following relationships:
    \begin{align}
        \sum_{s \in \cS}  p_s \| \lambda_s^{\nu} - \lambda_s^{\star} \|_2^2 &= \sum_{s \in \cS} p_s \| \lambda_s^{\nu-1} + \eta^{\nu} (x_s^{\nu} - \bar{x}^\nu) - \lambda_s^{\star} \|_2^2 \nonumber \\
        & \begin{aligned} \; \le & \sum_{s \in \cS} p_s \left( \| \lambda_s^{\nu-1} - \lambda_s^{\star} \|_2^2 +  (\eta^{\nu})^2 \|  x_s^{\nu} - \bar{x}^\nu  \|_2^2 \right) \\ 
        & \qquad\qquad  +  2 \eta^{\nu} (f_r^\star(\widehat{\Z}^{\nu-1},\cS,\lambda^{\nu-1})  - f_r^\star(\widehat{\Z}^{\nu-1},\cS,\lambda^{\star}) ) \end{aligned} \nonumber\\
        & \begin{aligned} \; \le & \sum_{s \in \cS} p_s \left( \|\lambda_s^{\nu_0} - \lambda_s^{\star} \|_2^2 + \sum_{k=\nu_0}^\nu (\eta^{k})^2 \|  x_s^{k} - \bar{x}^k  \|_2^2 \right)  \\ 
        & \qquad\qquad +  2 \sum_{k=\nu_0}^{\nu-1}  \eta^{k} (f_r^\star(\widehat{\Z}^{k},\cS,\lambda^{k})  - f_r^\star(\widehat{\Z}^{k},\cS,\lambda^{\star}) )
                            \end{aligned} \nonumber \\
        & \begin{aligned} \;= & \sum_{s \in \cS} p_s \left( \|\lambda_s^{\nu_0} - \lambda_s^{\star} \|_2^2 + \sum_{k=\nu_0}^\nu (\eta^{k})^2 \|  x_s^{k} - \bar{x}^k  \|_2^2 \right)  \\ 
        & \qquad\qquad +  2 \sum_{k=\nu_0}^{\nu-1}  \eta^{k} (f_r^\star(\widehat{\Z}^{\nu_0},\cS,\lambda^k)  - f_r^\star(\widehat{\Z}^{\nu_0},\cS,\lambda^{\star}) )
                            \end{aligned} \nonumber \\
        & \begin{aligned} \;\le & \sum_{s \in \cS} p_s \left( \|\lambda_s^{\nu_0} - \lambda_s^{\star} \|_2^2 + \sum_{k=\nu_0}^\nu (\eta^{k})^2 \|  x_s^{k} - \bar{x}^k  \|_2^2 \right) \\ 
        & \qquad\qquad  +  2 (LB^\nu
        - f_r^\star(\widehat{\Z}^{\nu_0},\cS,\lambda^{\star}) ) \sum_{k=\nu_0}^{\nu-1} \eta^{k}. \end{aligned} \label{eq:inequality_distance_lambdanu_lambdastar_2}
    \end{align} 
    The first equality follows from substituting the expression for $\lambda_s^{\nu}$ (line \ref{algstep:update_lambda}, \cref{alg:SSCP}). The first and second inequalities follow directly from~\cref{lemma:subgradient_REBSP}. The last equality is due to $\widehat{\Z}^{k}=\widehat{\Z}^{\nu_0}$ for all $k \geq \nu_0$. To establish the last inequality, we use the fact that for any fixed $k$, 
    \begin{equation*}
    f_r^\star (\widehat{\Z}^{\nu_0}, \cS, \lambda^k) \leq \max_{k\le \nu} f_r^\star(\widehat{\Z}^{k},\cS,\lambda^{k}) = LB^\nu.
    \end{equation*}    
    Next, since $\widehat{\Z}^\nu \subseteq \widehat{\Z}^{\nu_0}$ for all $\nu$, we have 
    \begin{equation*}    
    f_r^\star(\widehat{\Z}^{\nu_0},\cS,\lambda^{\star}) = \max_{\lambda} f_r^\star(\widehat{\Z}^{\nu_0},\cS,\lambda) \geq f_r^\star(\widehat{\Z}^{\nu_0},\cS,\lambda^\nu) \geq f_r^\star(\widehat{\Z}^\nu,\cS,\lambda^\nu),
    \end{equation*} and therefore \begin{equation*}        
    f_r^\star(\widehat{\Z}^{\nu_0},\cS,\lambda^{\star}) \geq \max_{k\le\nu} f_r^\star(\widehat{\Z}^k,\cS,\lambda^k) = LB^\nu. 
    \end{equation*} 
    Thus, we obtain 
    \begin{equation*}
        \lim_{\nu \to \infty} f_r^\star(\widehat{\Z}^{\nu_0},\cS,\lambda^{\star}) - LB^\nu \ge 0.
    \end{equation*}
    On the other hand, $\sum_{s \in \cS} p_s \| \lambda_s^{\nu} - \lambda_s^{\star} \|_2^2 \ge 0$, and hence from \cref{eq:inequality_distance_lambdanu_lambdastar_2} we get 
    \begin{multline}
        \lim_{\nu \to \infty} f_r^\star(\widehat{\Z}^{\nu_0},\cS,\lambda^{\star}) - LB^\nu \hfill\\
        \leq  \lim_{\nu \to \infty} \frac{ \sum_{s \in \cS} p_s \left( \|\lambda_s^{\nu_0} - \lambda_s^{\star} \|_2^2 + \sum_{k=\nu_0}^\nu (\eta^{k})^2 \|  x_s^{k} - \bar{x}^k  \|_2^2 \right)}{2 \sum_{k=\nu_0}^{\nu-1}  \eta^{k}} \\
        \leq  \lim_{\nu \to \infty} \frac{ \sum_{s \in \cS} p_s \left( \|\lambda_s^{\nu_0} - \lambda_s^{\star} \|_2^2 + \text{diam}(\X) \sum_{k=\nu_0}^\nu (\eta^{k})^2  \right)}{2 \sum_{k=\nu_0}^{\nu-1}  \eta^{k}} = 0. \label{eq:limit_UB_gap}
    \end{multline} 
    The last inequality follows from~\cref{assumption:Omega_bounded}. Specifically, since $\X$ is bounded, its diameter, denoted by $\text{diam}(\X)$, is finite and $\text{diam}(\X) = \max_{x_1,x_2 \in \X} \|x_1-x_2\|_2$. Hence, $\| x_s^{k} - \bar{x}^k  \|_2^2 \leq \text{diam}(\X)$. 
    Finally, to derive the limit in \eqref{eq:limit_UB_gap}, note that for all $\nu \ge \nu_0$, we have $\eta^\nu = 1 / (1 + \nu - \nu_0)$. Thus, by construction of the sequence $\eta^\nu$ (see Lines \ref{algstep:reset_eta} and \ref{algstep:update_eta} in \cref{alg:SSCP}), it follows that
    \begin{equation*}        
    \lim_{\nu \to \infty} \sum_{k=\nu_0}^{\nu -1}\eta^k = \lim_{\nu \to \infty} \sum_{k=1}^\nu \frac{1}{k} = \infty
    \end{equation*} 
    and 
    \begin{equation*}    
    \lim_{\nu \to \infty} \frac{\sum_{k=\nu_0}^{\nu -1}(\eta^k)^2}{\sum_{k=\nu_0}^{\nu -1}\eta^k} = \lim_{\nu \to \infty} \frac{\sum_{k=1}^{\nu}(1/k)^2}{\sum_{k=1}^{\nu}1/k} = \frac{\pi^2/6}{\infty}  = 0.
    \end{equation*}
    Consequently, 
    \begin{equation*}   
    \lim_{\nu \to \infty} LB^\nu
    = f_r^\star(\widehat{\Z}^{\nu_0},\cS,\lambda^{\star}).
    \end{equation*} 
\end{proof}


\subsection{Proof of Theorem~\ref{theorem:correctness}}

\begin{restatetheorem}[Restating Theorem~\ref{theorem:correctness}]
\cref{alg:SSCP} terminates after a finite number of iterations and produces a solution $x^\star$ such that \[
\sum_{s \in \mathcal{S}} p_s \, \phi_s^u(x^\star,z_s) \leq f^\star + \epsilon,\] 
where $(x^\star,z)$ satisfies \eqref{eq:upper_level_constraints}-\eqref{eq:bilevel_feasibility}, and $f^\star$ denotes the optimal value of \ref{eq:bilevel_stochastic_program}.
\end{restatetheorem}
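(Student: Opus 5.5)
The argument has two parts: finite termination, which comes from the finiteness of $\X$, the $\Z_s$, and the no-good cuts; and $\epsilon$-optimality, which comes from the validity of the two bounds whenever the loop stops.

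\emph{Validity of the bounds.} Every $LB$ value is some $f_r^\star(\widehat{\Z},\cS,\lambda)$. Since the scenario problems decouple, $\sum_s f_r^\star(\widehat{\Z}_s,\{s\},\lambda_s)=f_r^\star(\widehat{\Z},\cS,\lambda)$, so \cref{proposition:lower_bound} gives $LB\le f^\star$. For no-good cuts, I would argue that a cut at line~\ref{algstep:add_no_good_cut} is only applied to some $x\in\widehat{\X}$. Such an $x$ has already been evaluated exactly: its follower best responses were computed at line~\ref{algstep:optimal_follower_response}, and its value $\pi^\star$ was compared with $UB$. Hence $UB\le$ the best value over the excluded points. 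The lower bound computed on the reduced set $\X\setminus\{x\}$ is then still a lower bound for $\min\{f^\star|_{\X\setminus\{x\}},\ \text{incumbent}\}$, so the returned gap remains valid.

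For $UB$, every update sets $UB=\pi^\star$ with $x^\star=x$. Here $\pi^\star$ is computed from follower responses in $\D_s(x)$: either $\tilde z_s$, or $\hat z_s$ when $\hat z_s$ is at least as good as $\tilde z_s$ for the follower. So $(x^\star,z)$ satisfies \eqref{eq:upper_level_constraints}--\eqref{eq:bilevel_feasibility}, and by optimistic tie-breaking $\sum_s p_s\phi_s^u(x^\star,z_s)\le UB$. Therefore if the loop stops because $UB-LB\le\epsilon$, we get $\sum_s p_s\phi^u_s(x^\star,z_s)\le UB\le LB+\epsilon\le f^\star+\epsilon$.

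The consensus break case $\bar x=x$ is separate. Because every scenario solution has probability weight and $x=\hat x_{s^\star}$, the condition $\bar x=x$ together with the $\ell_1$ selection rule should force $\hat x_s=x$ for all $s$. Then the relaxation solution satisfies non-anticipativity and $f_r^\star=\sum_s p_s\phi_s^u(x,\hat z_s)$. Since $x\in\widehat{\X}$, the responses $\tilde z_s$ were added to $\widehat{\Z}_s$, and constraint \eqref{eq:bilevel_feasibility_cuts_EBSP} with $\hat z=\tilde z_s$ unblocked forces $\phi^l_s(x,\hat z_s)\le\phi^l_s(x,\tilde z_s)$. So $\hat z_s\in\D_s(x)$ and $UB\le f_r^\star\le f^\star$. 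I need to double-check that $UB$ was indeed updated with this value; if not, I would argue that $\pi^\star$ at evaluation already used responses no worse than these.

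\emph{Finite termination.} Line~\ref{algstep:CPM_non_recorded_leader_sol} can fire only $|\X|$ times, since $\widehat{\X}\subseteq\X$ is finite, and each firing adds to $\widehat{\Z}_s$ from the finite sets $\Z_s$ (finite by boundedness and the extreme-point construction). Each no-good cut removes a point of the finite set $\X$, so there are finitely many. It remains to show that between two such events the loop cannot run forever. In such a phase $\widehat{\X}$ and $\widehat{\Z}$ are fixed, and \cref{theorem:convergence} gives $LB^\nu\to\max_\lambda f_r^\star$. Because the relaxation's scenario optima lie in a finite set, the subgradients $\hat\chi$ take finitely many values. Near the dual optimum, the active subgradients have $0$ in their convex hull, so the bundle test fires after finitely many steps unless consensus or the gap condition occurs first.

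This last point is the main obstacle. Showing that the bundle's minimum-norm element becomes exactly zero requires the subgradients collected near $\lambda^\star$ to be exactly the ones spanning $\partial f_r^\star(\lambda^\star)$. My plan is to use piecewise-linearity of the dual function: it is the minimum of finitely many affine pieces. Once $\lambda^\nu$ is close to $\lambda^\star$, only pieces active at $\lambda^\star$ are optimal. Diminishing-step oscillation then visits pieces whose gradients have $0$ in their convex hull. If that argument fails, as a fallback I would rely on the exhaustion of $\X$: after at most $|\X|$ cuts, the leader set is a singleton or empty. At that point consensus is trivial, so the loop ends via the $\bar x=x$ branch.
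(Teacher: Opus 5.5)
Your skeleton matches the paper's: finiteness of $\X$, \cref{theorem:convergence} within a phase, certification of $\hat z_s$ through the cut generated by $\tilde z_s\in\widehat{\Z}_s$, and the bound sandwich at termination. However, the decisive step, termination within a phase in which $\widehat{\X}$, $\widehat{\Z}$ and $\X$ are fixed, is not established.

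\textbf{Termination within a phase.} Your fallback is circular. A no-good cut (line~\ref{algstep:add_no_good_cut}) is generated only when the bundle test fires. So if the test never fires in some phase, $\X$ never shrinks and exhaustion never starts, which is exactly the case you must exclude. Your primary plan is also incomplete. \cref{theorem:convergence} gives only $LB^\nu\to\max_\lambda f_r^\star$, not convergence of $\lambda^\nu$ to a specific $\lambda^\star$. Even near $\lambda^\star$, the subgradients actually visited could a priori be a subset of the active ones whose hull misses $0$. The paper does not supply this step either; it only asserts that the process ``strictly and finitely reduces the search space''.

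You can close it without local analysis. Within a phase, $\mathcal{B}$ is never cleared and the $\hat\chi$ take finitely many values. If the test never fired, the minimum-norm point $d$ of the hull of all collected subgradients would be nonzero, with $d^\top g\ge\|d\|_2^2$ for every collected $g$. Use the update $\lambda_s\gets\lambda_s+\eta(\hat x_s-\bar x)=\lambda_s+\eta\,\hat\chi_s/p_s$ from the proof of \cref{theorem:convergence}. Then $\sum_s p_s d_s^\top\lambda_s$ grows by at least $\eta^\nu\|d\|_2^2$ per iteration, so it diverges because $\sum_\nu\eta^\nu=\infty$. On the other hand, restart the chain \eqref{eq:inequality_distance_lambdanu_lambdastar_2} at the beginning of the phase; every term there satisfies $f_r^\star(\widehat{\Z},\cS,\lambda^k)\le f_r^\star(\widehat{\Z},\cS,\lambda^\star)$. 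This keeps $\sum_s p_s\|\lambda_s^\nu-\lambda_s^\star\|_2^2$ bounded, since $\sum_k(\eta^k)^2<\infty$, which is a contradiction.

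This argument genuinely needs the per-scenario update. Read literally, line~\ref{algstep:update_lambda} adds the same vector to every $\lambda_s$. Under the decoupling you assume, the Lagrangian term is then identically zero, so the relaxation never changes and the loop runs forever whenever $\bar x\neq x$.

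\textbf{The consensus branch.} It has further gaps, all of which the paper's proof also passes over.
\begin{enumerate}
\item $\bar x=x=\hat x_{s^\star}$ does not force $\hat x_s=x$ for all $s$. Take three equiprobable scenarios and a general-integer leader variable with $(\hat x_1,\hat x_2,\hat x_3)=(0,1,2)$. Then $\bar x=1$ and the $\ell_1$ rule picks $x=\hat x_2$. The algorithm breaks even though $\hat z_1,\hat z_3$ were optimized against other leader decisions. The implication holds when every point of $\X$ is a vertex of $\operatorname{conv}(\X)$, e.g.\ binary $x$.
\item $\tilde z_s$ enters $\widehat{\Z}_s$ only for those $s$ with $\phi_s^l(x,\tilde z_s)<\phi_s^l(x,\hat z_s)$ at the iteration where $x$ was first evaluated, and for no $s$ if none improves. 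So the cut you use to certify $\hat z_s\in\D_s(x)$ need not exist.
\item After a no-good cut, the relaxation lives on the reduced $\X$. \cref{proposition:lower_bound} then bounds only the optimum over the remaining leader decisions. Hence $f_r^\star\le f^\star$ fails if an excluded point was optimal, and line~\ref{algstep:set_bilevel_optimal_sol_2} overwrites the incumbent with $x$.
\end{enumerate}

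Your gap-branch safety argument would repair the third point. But it needs $UB$ to be at most the optimistic value of every excluded point, and the algorithm as written does not ensure this, for three reasons:
\begin{enumerate}
\item $UB$ is updated only on strict improvement.
\item $\tilde z_s$ need not be the leader-preferred optimal response.
\item For $s\neq s^\star$, $\hat z_s$ was paired with $\hat x_s$ and may be infeasible for $x$. This also breaks your claim that $\pi^\star$ uses responses in $\D_s(x)$.
\end{enumerate}
You must either prove these properties or state the algorithmic repairs under which your argument holds: always add $\tilde z_s$, record every evaluated point in $UB$ with leader-preferred responses, and return the better of $x$ and the incumbent.
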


\begin{proof}
Suppose $\epsilon=0$. Since $\X$ is finite, \cref{alg:SSCP} can only execute the truth clause in \cref{algstep:CPM_non_recorded_leader_sol} a finite number of times before encountering previously evaluated solutions. Let $(\hat{x}_s^{\nu}, \hat{z}_s^{\nu})$ be the scenario-wise optimal solution to the relaxation \ref{eq:relaxed_bilevel_stochastic_program} computed at iteration $\nu$ (see \cref{alg_step:solve_relaxation}). We establish the following relationships: 
\begin{equation}\label{eq:lower_bound_convergence}
\sum_{s \in \cS} p_s \phi_s^u (\hat{x}_s^{\nu}, \hat{z}_s^{\nu}) \leq \lim_{\nu \to \infty} \sum_{s \in \cS} p_s \phi_s^u (\hat{x}_s^{\nu}, \hat{z}_s^{\nu}) = f_r^\star(\widehat{\Z}^{\nu_0},\cS,\lambda^{\star}) \leq f^\star.
\end{equation}
The first inequality holds because the relaxation \ref{eq:relaxed_bilevel_stochastic_program} becomes progressively more restricted as follower cuts are added over iterations. The subsequent equality is guaranteed by \cref{theorem:convergence}, and the final inequality follows from \cref{proposition:lower_bound}.

By \cref{proposition:non_anticipativity_convergence}, the Lagrangian dual bound $f_r^\star(\widehat{\Z}^{\nu_0},\cS,\lambda^{\star})$ enforces non-anticipativity over the convexified feasible space. However, due to the nonconvex duality gap inherent to mixed-integer programs, the multiplier updates alone may stall at disjoint, fractional scenario solutions ($\bar{x}^\nu \neq x_s^{\nu}$). To overcome this and guarantee primal convergence, \cref{alg:SSCP} actively monitors for a theoretical optimality certificate stall ($0 \in \partial_\lambda f_r^\star$) and invokes a primal space exhaustion mechanism (\cref{algstep:add_no_good_cut}) via no-good cuts on $\X$ whenever the dual bound stalls.

Because $\X$ and $\Z$ are strictly bounded and finite (\cref{assumption:Omega_bounded}), this combined process of dual bounding, follower cut generation, and primal space exhaustion strictly and finitely reduces the search space. Consequently, the algorithm must finitely terminate in one of two ways: either the lower bound surpasses the upper bound (implying the optimal solution is already found and stored in $x^\star$), or the algorithm isolates a primal feasible solution that inherently satisfies discrete non-anticipativity ($\bar{x}^\nu = x_s^{\nu}$ for all $s\in\cS$).
When a consensus solution $x^\nu = \hat{x}_s^{\star,\nu}$ is reached, let $\tilde{z}_s^{\nu}$ be the true optimal follower response obtained by solving \eqref{eq:BSP_follower_problem} (see Lines \ref{algstep:consensus} and \ref{algstep:optimal_follower_response}). Note that $\tilde{z}_s^{\nu} \in \widehat{\Z}^\nu$. Because $(x^\nu, \hat{z}_s^{\nu})$ satisfies the generated optimality cuts \cref{eq:bilevel_feasibility_cuts_EBSP}, and $\bar{x}^\nu = x^\nu$, we have $\phi_s^l (x^\nu,\hat{z}_s^{\nu}) \leq \phi_s^l (x^\nu, \tilde{z}_s^{\nu})$ for all $s \in \cS$. This ensures $\hat{z}_s^{\nu}$ is a true optimal follower response satisfying \eqref{eq:upper_level_constraints}--\eqref{eq:bilevel_feasibility}. Thus, \cref{algstep:set_bilevel_optimal_sol_2} is executed in finite time.

It follows from \eqref{eq:lower_bound_convergence} and the non-anticipativity condition \eqref{eq:nonanticipativity_constraint_EBSP} that:
\begin{equation*}
LB^\nu = \sum_{s \in \cS} p_s \phi_s^u (x_s^{\nu}, \hat{z}_s^{\nu}) \le f^\star \le  \sum_{s \in \cS} p_s \phi_s^u (x^\nu, \hat{z}_s^{\nu}) = \sum_{s \in \cS} p_s \phi_s^u (x_s^{\nu}, \hat{z}_s^{\nu}).
\end{equation*}
The first equality is the definition of the algorithm's lower bound $LB^\nu$ evaluated at the relaxed scenario-wise solution. The first inequality holds because $LB^\nu$ is a valid global lower bound, as established in \eqref{eq:lower_bound_convergence}. The second inequality holds because $x^\nu$, alongside its certified optimal follower response $\hat{z}_s^{\nu}$, constitutes a strictly feasible solution to the original bilevel problem. Therefore, its objective value is a valid upper bound that must be greater than or equal to the true optimum $f^\star=\sum_{s \in \cS} p_s \phi_s^u (x^\star, z_{s})$, where $z$ is a best response to the optimal leader solution $x^\star$. The final equality holds precisely because the algorithm has reached discrete consensus, meaning the scenario-specific decisions $x_s^{\nu}$ are mathematically identical to the consensus decision $x^\nu$ across all scenarios.

Hence, when $\epsilon=0$, \cref{alg:SSCP} returns an optimal solution $x^\star$ to \ref{eq:bilevel_stochastic_program} with an objective value equal to $f^\star$. 

Finally, when $\epsilon>0$, the result naturally follows from the fact that $LB$ and $UB$ are valid bounds, and the algorithm terminates in finite time once their difference is strictly less than~$\epsilon$. 
\end{proof}

\section{
Uncertain Bilevel Knapsack Problem}\label{sec:ubkp}

\looseness-1The Uncertain Bilevel Knapsack Problem (UBKP) extends the classic bilevel knapsack model by introducing uncertain parameters that affect the profit and capacity of both the leader and the follower \citep{Xue2017Uncertain}. In this setting, the leader selects a subset of items to maximize their expected profit, anticipating the follower's response. The follower then selects from a disjoint set of items to maximize their own profit, subject to a knapsack capacity that depends on the leader's selection.

\subsection{\ref{eq:bilevel_stochastic_program} Formulation}

Let $\cS$ be the set of discrete scenarios, with each scenario $s \in \cS$ occurring with probability $p_s$. Let $x \in \{0,1\}^{n_1}$ denote the leader's binary decisions, and $z_s \in \{0,1\}^{n_2}$ denote the follower's binary decisions under scenario $s$. The \ref{eq:bilevel_stochastic_program} formulation of UBKP is defined as follows:
\begin{subequations} \label{eq:ubkp_bsp}
\begin{align}
    \minimize_{x,z} \quad & - \sum_{s \in \cS} p_s \left((d_1^s)^\top x + (d_2^s)^\top z_s\right) \label{eq:ubkp_obj} \\
    \text{subject to} \quad & b_{1}^\top x \leq c_1 \label{eq:ubkp_leader_cap} \\
    & z_s \in \argmin_{z} \left\{ - (d_{3}^s)^\top z \colon (b_1^s)^\top x + (b_2^s)^\top z \leq c_{2}^s \right\}, \quad \forall s \in \cS \label{eq:ubkp_follower} \\
    & x \in \{0,1\}^{n_1}, \quad z \in \{0,1\}^{n_2}
\end{align}
\end{subequations}
where $d_1^s, d_2^s$ are the leader's profit coefficients under scenario $s$, and $d_3^s$ are the follower's profit coefficients. The leader faces a deterministic capacity constraint \eqref{eq:ubkp_leader_cap} with item weights $B_1$ and capacity $c_1$. The follower faces a single lower-level knapsack constraint with stochastic item weights $b_1^s, b_2^s$ and capacity $c_2^s$. Note that we represent the maximization objectives as minimization of the negative profits to align with our framework.

\subsection{\ref{eq:exponential_reformulation_BSP} Reformulation and Big-$M$ Parameters}

To solve Problem~\eqref{eq:ubkp_bsp} with Algorithm~\ref{alg:SSCP}, it needs to be reformulated as \ref{eq:exponential_reformulation_BSP} using scenario-wise value-function cuts. For a given scenario $s$ and a set of historical follower responses $\widehat{\Z}_s$, we enforce optimality by ensuring that the current follower solution $z$ achieves an objective value at least as good as any previously discovered feasible response $\hat{z} \in \widehat{\Z}_s$, provided that $\hat{z}$ is not blocked by the leader's decision $x$. 

The follower's capacity constraints are negated to obtain the $\ge$ form: $-\sum_i b_{1ki}^s x_i - \sum_j b_{2kj}^s z_j \ge -c_{2k}^s$. For a historical response $\hat{z}$, define $\gamma_k(\hat{z}) = \lceil -c_{2k}^s + \sum_j b_{2kj}^s \hat{z}_j \rceil - 1$. The formulation requires two Big-$M$ parameters:
\begin{enumerate}
    \item \textbf{Blocking Parameter ($M$):} The term isolating the leader's variables is $h^{x,l}(x) = -\sum_i b_{1ki}^s x_i$. Since $b_{1ki}^s \ge 0$ and $x_i \ge 0$, the maximum value of this term is $0$. Therefore, we can set a perfectly tight bound of $M = 0$.
    \item \textbf{Dominance Parameter ($\tilde{M}_s$):} If a historical solution $\hat{z}$ is feasible (i.e., not blocked), the current response $z$ must satisfy $-\sum_j d_{3j}^s z_j \le -\sum_j d_{3j}^s \hat{z}_j + \tilde{M}_s \sum_k w_k$, where $w_k \in \{0,1\}$ are the blocking indicator variables. The maximum difference in follower objective values occurs when $\hat{z}$ includes all items and $z$ none. Thus, a valid tight upper bound is $\tilde{M}_s = \sum_j d_{3j}^s$.
\end{enumerate}

\subsection{Tavasl\i o\u glu et al. Reformulation}\label{subsec:ubkp_p4}

We benchmark our decomposition framework against the generalized value-function reformulation of \citet{Tavasliouglu2019SolvingStochastic}, which we call P4, as in the source. In P4, each scenario $s \in \cS$ is treated as an independent follower. Because the follower faces a single knapsack constraint, the quantity the leader passes down to scenario $s$ is the scalar \emph{tender} $(b_1^s)^\top x$, and the follower's value function is the parametric problem
\begin{equation*}
f_s(\beta) = \max_{z} \left\{ (d_3^s)^\top z \colon (b_2^s)^\top z \le \beta \right\}, \qquad s \in \cS,
\end{equation*}
so that $-f_s\big(c_2^s - (b_1^s)^\top x\big)$ is the optimal follower objective in \eqref{eq:ubkp_follower}. Since $b_1^s$ and $b_2^s$ are scenario dependent, every scenario induces its own tender set $\Q_s = \{ (b_1^s)^\top x \colon x \in \X \} = \{ v_1^s, \ldots, v_{\vert \Q_s \vert}^s \}$. Introducing for each scenario a one-hot tender selector $u^s \in \{0,1\}^{\vert \Q_s \vert}$, \eqref{eq:ubkp_bsp} can be reformulated as the single-level mixed-integer program
\begin{subequations}\label{subeq:ubkp_p4}
\begin{align}
    \minimize_{x,z,u} \quad & - \sum_{s \in \cS} p_s \left( (d_1^s)^\top x + (d_2^s)^\top z_s \right) \label{eq:ubkp_p4_obj} \\
    \text{subject to} \quad & b_1^\top x \le c_1, \label{eq:ubkp_p4_leader} \\
    & (b_1^s)^\top x + (b_2^s)^\top z_s \le c_2^s, && s \in \cS, \label{eq:ubkp_p4_follower_feas} \\
    & (b_1^s)^\top x = \sum_{\ell = 1}^{\vert \Q_s \vert} v_\ell^s u_\ell^s, && s \in \cS, \label{eq:ubkp_p4_tender_tie} \\
    & \sum_{\ell = 1}^{\vert \Q_s \vert} u_\ell^s = 1, && s \in \cS, \label{eq:ubkp_p4_onehot} \\
    & (d_3^s)^\top z_s = \sum_{\ell = 1}^{\vert \Q_s \vert} f_s\!\left( c_2^s - v_\ell^s \right) u_\ell^s, && s \in \cS, \label{eq:ubkp_p4_optimality} \\
    & x \in \{0,1\}^{n_1}, \quad z_s \in \{0,1\}^{n_2}, \quad u^s \in \{0,1\}^{\vert \Q_s \vert}, && s \in \cS. \label{eq:ubkp_p4_domains}
\end{align}
\end{subequations}

Constraints \eqref{eq:ubkp_p4_tender_tie}-\eqref{eq:ubkp_p4_onehot} ensure that the selected tender matches the tender realized by $x$, and \eqref{eq:ubkp_p4_optimality} enforces follower optimality by equating the follower's objective to the value function evaluated at the selected tender. The cooperative tie-breaking rule is recovered without additional modeling, as the $d_2^s$ term in \eqref{eq:ubkp_p4_obj} selects the leader-preferred element among the follower's optimal responses. In contrast to \ref{eq:exponential_reformulation_BSP}, formulation \eqref{subeq:ubkp_p4} does not require big-$M$ constants: follower optimality is enforced exactly via the value-function equality \eqref{eq:ubkp_p4_optimality} rather than via indicator-scaled bounds. The trade-off is the upfront construction of the tender sets and the associated value functions $f_s(\cdot)$, which must be completed before the single mixed-integer program can be built.

In this case, the tender set can be constructed without enumerating $\X$. Note that $\Q_s$ requires enumerating the $2^{n_1}$ candidate leader decisions, which is prohibitive at the instance sizes considered here and would understate the performance of P4. This enumeration can be avoided. Since $b_1^s$ is non-negative and integral and $x$ is binary, every realized tender is a non-negative integer. Moreover, the follower is feasible only if $(b_1^s)^\top x \le c_2^s$, because $z = 0$ is the component-wise-minimal follower point. Hence, every tender that a feasible solution of \eqref{subeq:ubkp_p4} can select lies in the grid $\bar{\Q}_s = \{ 0, 1, \ldots, c_2^s \} \supseteq \Q_s \cap \{ v \colon v \le c_2^s \}$. Replacing $\Q_s$ by $\bar{\Q}_s$ leaves the optimal value of problem \eqref{subeq:ubkp_p4} unchanged: a tender in $\bar{\Q}_s \setminus \Q_s$ is unachievable, so \eqref{eq:ubkp_p4_tender_tie} cannot hold for any feasible $x$ when the corresponding selector is active, and such a tender is never selected. We therefore build $\bar{\Q}_s$ directly, which requires $\vert \bar{\Q}_s \vert = c_2^s + 1$ value-function evaluations \emph{independently of $n_1$}. The cost of P4 on the UBKP is thus driven by the follower's capacity rather than by the size of the leader's decision space, and the reformulation remains pseudo-polynomial in $\sum_{s \in \cS} c_2^s$ for this problem class even as $n_1$ grows.

\subsection{Calibration and Computational Environment}\label{subsec:environment}

To generate problem instances, the leader’s deterministic parameters are sampled as follows. The weights are drawn independently as $B_{1i} \sim \mathcal{U}\{1, \ldots, 9\}$, and the capacity is set to $c_1 = \left\lfloor 0.5 \sum_i B_{1i} \right\rfloor$.

Uncertainty across scenarios is modeled using multiplicative noise factors $\xi_1 \sim \mathcal{U}(0.9, 1.4)$, $\xi_2 \sim \mathcal{U}(0.6, 1.05)$, and $\xi_3 \sim \mathcal{U}(0.8, 1.1)$. For each scenario $s \in \cS$, the profit coefficients are generated by scaling the base integer values with these factors:
\begin{equation*}
\begin{aligned}
d_{1i}^s &= \left\lfloor \xi_1 U_{1i} \right\rfloor, \quad &&U_{1i} \sim \mathcal{U}\{2, \ldots, 7\}, \ \forall i, \\
d_{2j}^s &= \left\lfloor  \xi_2 U_{2j} \right\rfloor, \quad &&U_{2j} \sim \mathcal{U}\{1, \ldots, 5\}, \ \forall j, \\
d_{3j}^s &= \left\lfloor  \xi_3 U_{3j} \right\rfloor, \quad &&U_{3j} \sim \mathcal{U}\{2, \ldots, 5\}, \ \forall j.
\end{aligned}
\end{equation*}

The follower’s constraint weights are sampled independently as $b_{1ki}^s, b_{2kj}^s \sim \mathcal{U}\{1, \ldots, 4\}$. For each constraint $k$, the corresponding capacity is defined as
\begin{equation*}
c_{2k}^s = \left\lfloor 0.5 \left( \sum_i b_{1ki}^s + \sum_j b_{2kj}^s \right) \right\rfloor.
\end{equation*}
Finally, the probability weight for each scenario is set uniformly, i.e., $p_s = 1/\vert \cS \vert$.

We varied the number of leader items $n_1 \in \{50,100,200,500\}$, the number of follower items $n_2 \in \{50,100,200,500\}$, and the number of scenarios $\vert \cS\vert \in \{100, 500, 1000\}$, generating a test bed of 48 instances, one for each parameter combination.


Two parameters of the benchmark warrant additional comment, since the comparison is meaningful only if P4 is implemented and parameterized favorably. First, we replaced the value-function equality \eqref{eq:ubkp_p4_optimality} with the inequality $(d_3^s)^\top z_s \ge \sum_\ell f_s(c_2^s - v_\ell^s) u_\ell^s - \varepsilon_{\mathrm{vf}}$, which is equivalent because follower feasibility \eqref{eq:ubkp_p4_follower_feas} already provides the reverse inequality. This avoids floating-point equality checks on non-integral objective coefficients. We use $\varepsilon_{\mathrm{vf}} = 10^{-6}$. This tolerance must remain well below the smallest positive difference between distinct follower objective values, which is of order $10^{-1}$ for the coefficient ranges generated here, so it cannot admit a suboptimal follower response. It should be rescaled together with the objective coefficients if instances of a different magnitude are generated. Second, the time and memory budgets bound both phases of P4 jointly: value-function evaluations are checked against the remaining budget as they are generated, and whatever remains is passed to the single-level mixed-integer program. An instance whose budget is exhausted during the value-function phase yields neither an incumbent nor a bound and is reported with an infinite gap, whereas an instance whose budget is exhausted in the final program is reported with its incumbent and the solver's bound. In contrast, \cref{alg:SSCP} is computationally less expensive, since we initialize the algorithm using only the solution $x^0=\mathbf{0}$.

All numerical experiments were conducted on a 2025 Mac Studio with an Apple M4 Max and 128 GB of RAM, running macOS 26.3.1. The implementation was written in Python 3.13 \citep{python}. Every mixed-integer program arising in either method was solved through Gurobi 13 \citep{gurobi}, and the restricted bundle quadratic program of \cref{alg:SSCP} was solved with CVXPY 1.8.2 \citep{diamond2016cvxpy,agrawal2018rewriting}.

To compare the two approaches, each instance was given a wall-clock limit of 3600 seconds and a peak-memory limit of 10 GB. The solver was restricted to a single thread to maximize reproducibility. Peak memory is measured as the maximum resident set size of the solving process and its spawned solver subprocesses, \emph{in excess of} the baseline recorded immediately before the solve, so that the reported figure reflects the memory attributable to the model rather than the interpreter's footprint.

Gaps are reported in relative terms throughout. For \cref{alg:SSCP}, we report $(UB - LB) / \vert UB \vert$. 
For P4, we report the relative optimality gap returned by the solver for problem \eqref{subeq:ubkp_p4}. A method that terminates without a valid bound because a limit was reached before any bound became available is recorded with an infinite gap. Such instances are omitted from the gap profiles, thereby causing the profiles to plateau below $100\%$.

\subsection{Numerical Results}

\Cref{fig:ubkp_results}(a) reports the relative optimality gap profile, showing, for each method, the percentage of the 48 instances whose final gap does not exceed the value on the horizontal axis. The two curves cross, and neither method dominates in every respect. At tight tolerances, P4 is clearly ahead: it certifies $2.1\%$ of instances to within $0.1\%$ and reaches $22.9\%$ within a $1\%$ gap, whereas \cref{alg:SSCP} does not produce gaps below $1\%$ for any instance. Above a gap of roughly $4\%$, the ordering reverses: \cref{alg:SSCP} places $47.9\%$ of instances within $5\%$ and $77.1\%$ within $10\%$, against $35.4\%$ and $37.5\%$ for P4. The clearest difference is at the right end of the range, where the profile of \cref{alg:SSCP} reaches $100\%$ while the P4 profile plateaus at $81.2\%$: for 9 of the 48 instances, P4 exhausted its time or memory budget before any bound became available, whereas the decomposition returns a valid bound for every instance. This asymmetry is structural rather than incidental: the scenario decomposition maintains valid bounds from its first iteration and can be stopped at any time, whereas P4 produces no bound until its value-function phase is complete and the solver finds a feasible solution. The corresponding cost is a heavier tail: the largest relative gap returned by \cref{alg:SSCP} is $11.07$, against $0.74$ across the 39 instances on which P4 terminates with a bound, so on the hardest instances the Lagrangian bound can remain weak even though it is always available.

\begin{figure}[htbp]
\centering
\subfigure[Relative optimality gap profile]{\includegraphics[width=0.48\textwidth]{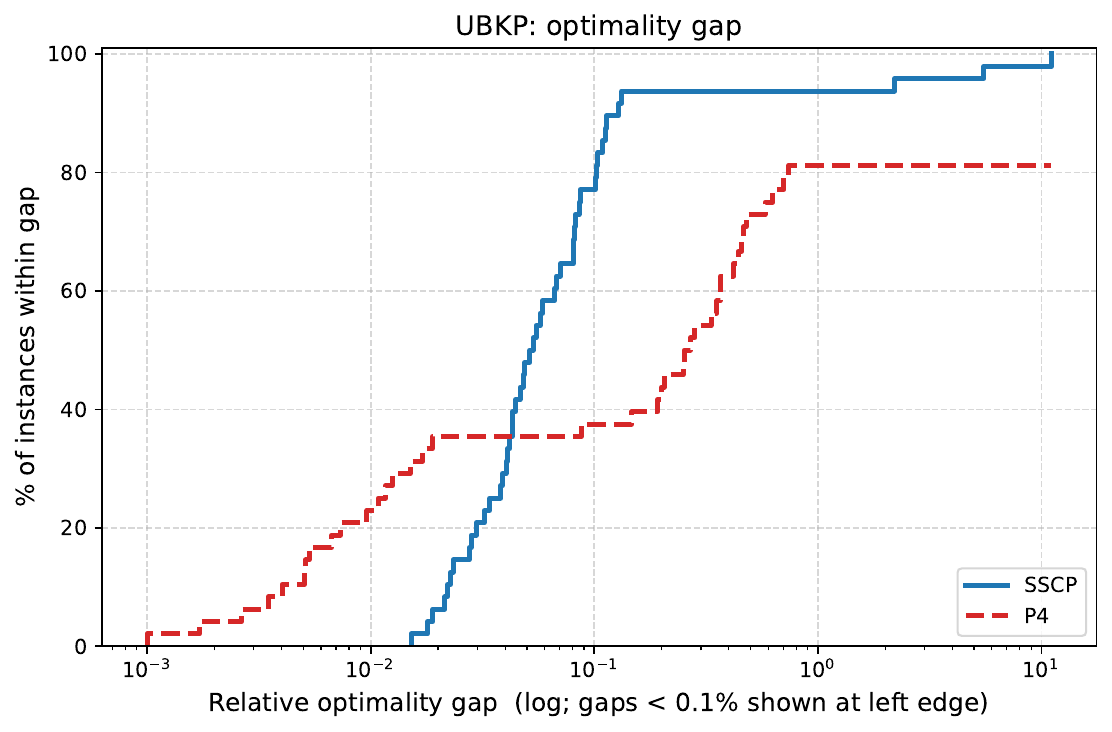}}
\hfill
\subfigure[Gap versus solution time]{\includegraphics[width=0.48\textwidth]{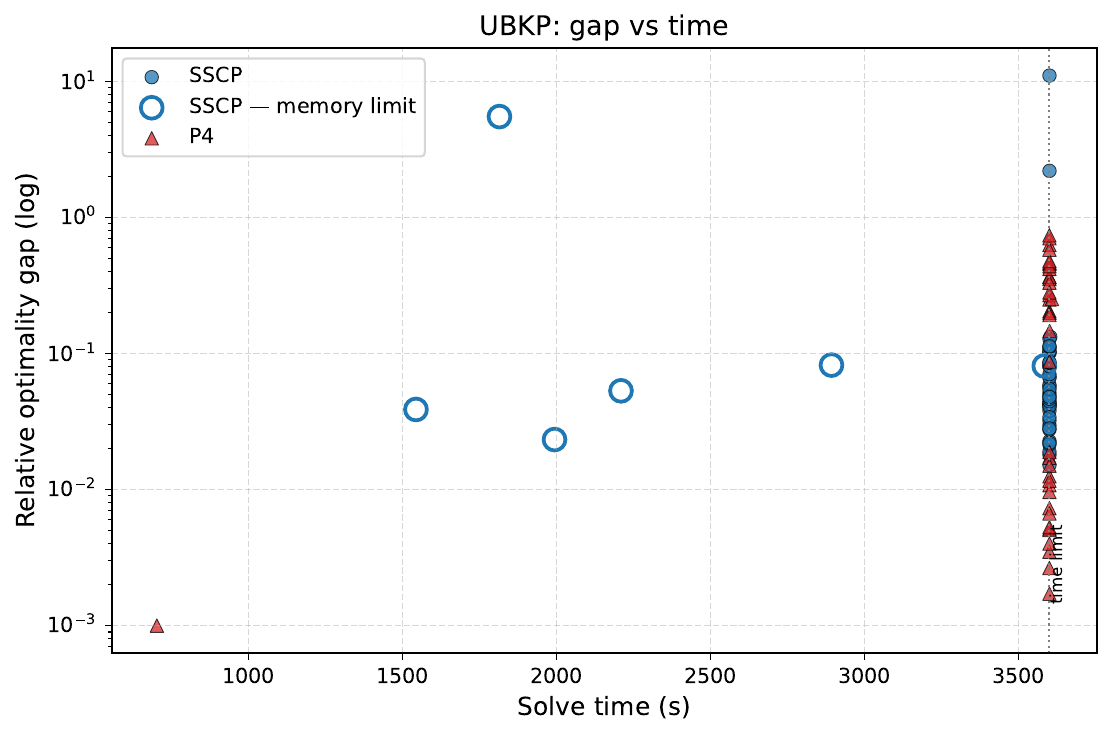}}
\\[0.5em]
\subfigure[Peak memory profile]{\includegraphics[width=0.48\textwidth]{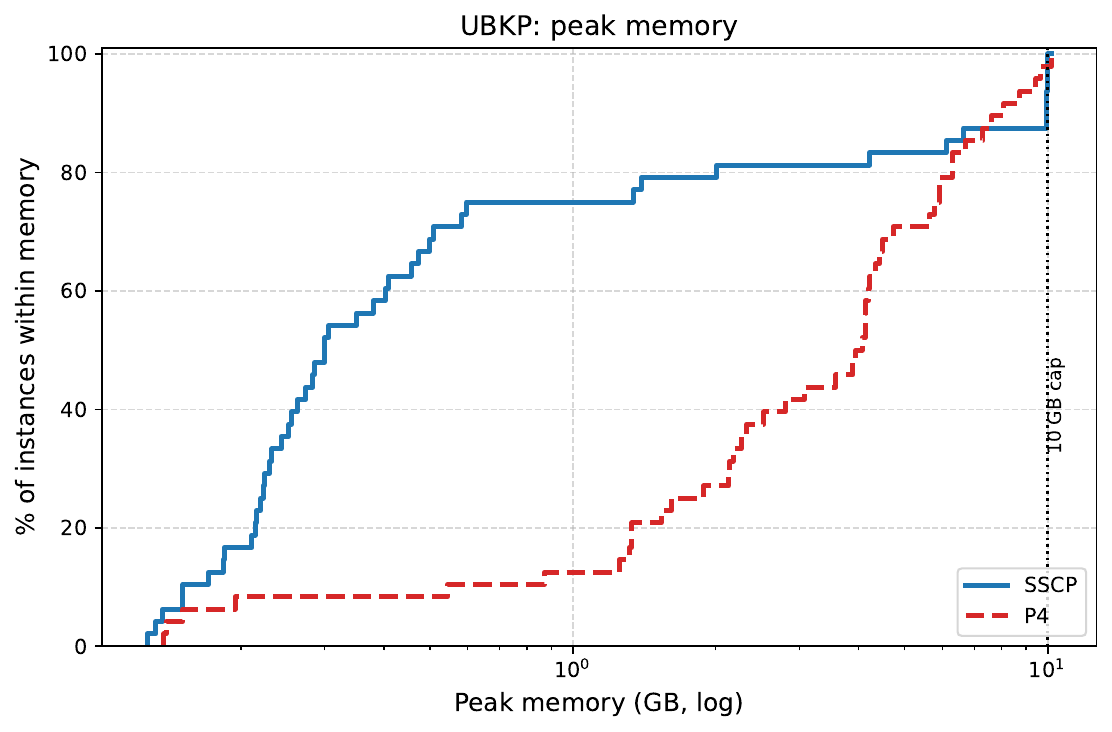}}
\hfill
\subfigure[Work per instance]{\includegraphics[width=0.48\textwidth]{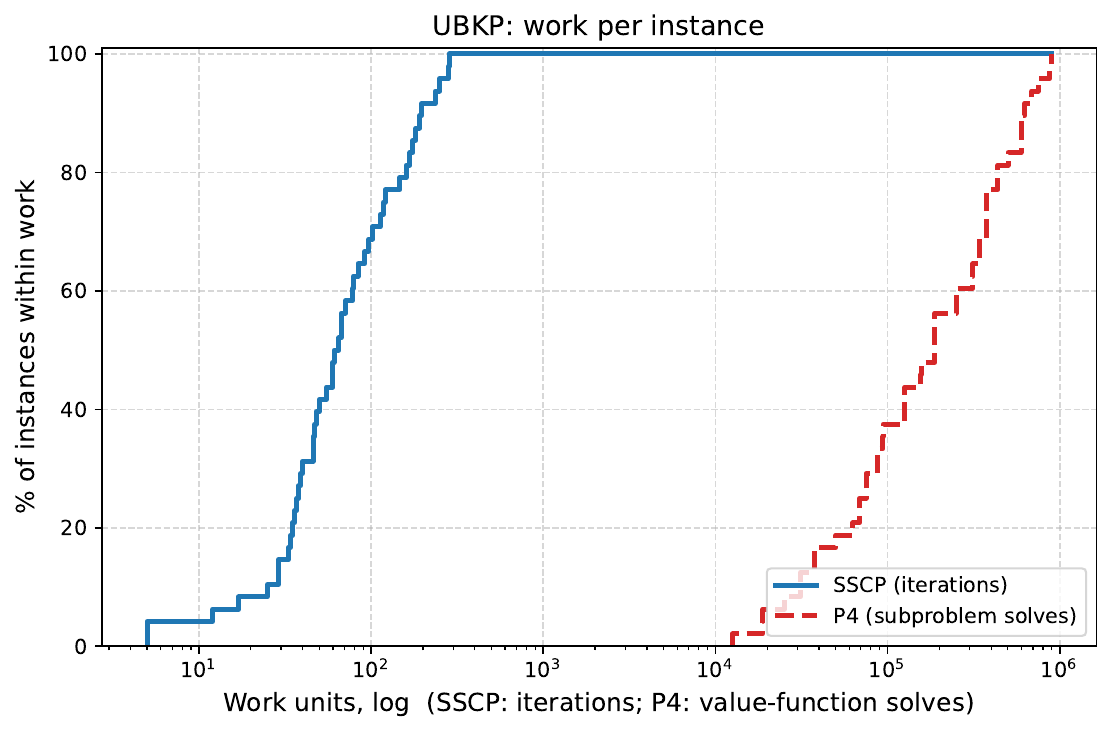}}
\caption{UBKP results across the 48 instances. In panel (a), the curve plateaus below $100\%$ by exactly the share of instances for which that method returned no valid bound. In panel (b), hollow markers denote runs terminated by the memory limit, and instances without a bound are omitted. Panels (c) and (d) include every run, whether or not it produced a bound. In panel (d), the two series represent different work units: an iteration of \cref{alg:SSCP} comprises $O(\vert\cS\vert)$ mixed-integer solves, whereas a P4 unit is a single value-function solve.}
\label{fig:ubkp_results}
\end{figure}

\Cref{fig:ubkp_results}(b) plots the same gaps against solution time and shows how the budget is consumed. At this scale, the comparison is of gap quality at a fixed budget rather than of time to optimality: 42 of the 48 runs of \cref{alg:SSCP} and 44 of the 48 runs of P4 exhaust the one-hour limit. The six remaining runs of \cref{alg:SSCP}, shown as hollow markers, are stopped by the 10 GB memory limit after between $1{,}543$ and $3{,}583$ seconds. Because \cref{alg:SSCP} maintains a valid bound at every iteration, each of these runs still returns a finite gap, with five below $9\%$ and one at $5.53\%$. P4 exhibits the opposite failure mode: one instance is solved to proven optimality in $702$ seconds, but its three memory-limited runs are stopped during the value-function phase and, never having built the mixed-integer program, return no bound at all. The six time-limit runs whose value-function phase was still incomplete after one hour also return no bound.

\begin{table}[htbp]
\centering\footnotesize
\caption{UBKP: outcome distribution and matched-subset comparison over the 48 instances. Gaps are relative; memory is peak usage attributable to the run.}
\begin{threeparttable}
\label{tab:ubkp_results}
\begin{tabular}{lrr}
\toprule
 & \cref{alg:SSCP} & P4 \\
\midrule
\multicolumn{3}{l}{\emph{Panel A: outcomes over all 48 instances}} \\
Instances returning a valid bound & 48 & 39 \\
Gap $\le 0.1\%$ & 0.0\% & 2.1\% \\
Gap $\le 1\%$ & 0.0\% & 22.9\% \\
Gap $\le 5\%$ & 47.9\% & 35.4\% \\
Gap $\le 10\%$ & 77.1\% & 37.5\% \\
Largest finite gap & 11.075 & 0.741 \\
Largest peak memory (GB) & 10.01 & 10.18 \\
Instances at the memory cap & 12.5\% & 2.1\% \\
Optimal / time limit / memory limit & 0 / 42 / 6 & 1 / 44 / 3 \\
\midrule
\multicolumn{3}{l}{\emph{Panel B: mean relative gap, by matched subset}} \\
Both returned a bound ($n=39$) & 0.5330 & 0.2144 \\
Only \cref{alg:SSCP} returned a bound ($n=9$) & 0.0751 & --- \\
Only P4 returned a bound ($n=0$) & --- & --- \\
Each over its own bounded set & 0.4471\tnote{a} & 0.2144\tnote{b} \\
\midrule
\multicolumn{3}{l}{\emph{Panel C: mean peak memory (GB), by matched subset}} \\
Both hit the time limit ($n=39$) & 0.539 & 3.555 \\
Only \cref{alg:SSCP} hit the time limit ($n=3$) & 3.710 & 5.050 \\
Only P4 hit the time limit ($n=5$) & 9.966 & 5.223 \\
\midrule
\multicolumn{3}{l}{\emph{Panel D: instances proved optimal}} \\
Both proved optimality & \multicolumn{2}{c}{none} \\
Only P4 proved optimality ($n=1$): memory (GB) & 0.263 & 0.544 \\
\hspace{1em}time (s) & 3600.2 & 702.1 \\
\bottomrule
\end{tabular}
\begin{tablenotes}\footnotesize
\item[a] Mean over all 48 instances. \item[b] Mean over the 39 instances on which P4 returned a bound.
\end{tablenotes}
\end{threeparttable}
\end{table}

\Cref{fig:ubkp_results}(c) reports the peak-memory profile. The two curves have markedly different shapes. The profile of \cref{alg:SSCP} rises steeply, with a median of $0.30$ GB and $75\%$ of instances below $1$ GB, but develops a heavy tail on the hardest instances as the accumulated follower-response sets $\widehat{\Z}_s$ grow, and $12.5\%$ of runs terminate at the cap. The P4 profile is shifted right, with a median of $4$ GB and $87.5\%$ of instances above $1$ GB, since the tender grids and the value functions over them must be held in full before the single-level mixed-integer program is assembled; only one P4 run reaches the cap, so on this problem class memory is a binding resource for the decomposition on a few extreme instances and a uniformly elevated but rarely binding cost for P4. \Cref{fig:ubkp_results}(d) reports work per instance in each method's native unit, and the two series are therefore not directly comparable: \cref{alg:SSCP} performs between 5 and 284 iterations (median 63), each comprising $O(\vert\cS\vert)$ mixed-integer solves, whereas P4 performs between $1.3 \times 10^4$ and $8.9 \times 10^5$ value-function solves per instance, in line with the pseudo- polynomial size $\vert \bar{\Q}_s \vert = c_2^s + 1$ of the tender grids.

\Cref{tab:ubkp_results} decomposes these aggregates and guards against a survivorship confound in the profile comparison. Restricted to the 39 instances on which both methods return a bound (Panel B), the ordering of mean gaps reverses: $0.533$ for \cref{alg:SSCP} against $0.214$ for P4. The reversal is explained by the nine instances that only \cref{alg:SSCP} bounds, which are comparatively easy for it (mean gap $0.075$): removing them leaves the decomposition holding the hardest instances alone, so coverage and average quality must be read together rather than separately. Panels C and D refine the resource picture. On the 39 instances where both methods run out the clock, \cref{alg:SSCP} uses on average an order of magnitude less memory ($0.54$ against $3.56$ GB), and the five instances on which only P4 hits the time limit are precisely the instances on which \cref{alg:SSCP} was stopped by the memory cap (mean peak $9.97$ GB). The single instance solved to proven optimality within the budget is solved by P4.

\section{
Bilevel Fuel Treatment Planning (BiFTP)}
\label{sec:biftp}

The Bilevel Fuel Treatment Planning (BiFTP) model addresses the optimal spatial allocation of fuel treatments to mitigate wildfire risk \citep{LAGOS2025bilevel} and is formulated as a Stackelberg game. The fuel treatment planner (the leader) determines treatment locations and types before the fire season. Nature (the follower) then designs a worst-case wildfire attack over multiple periods to maximize expected damage across a finite set of weather scenarios.

\subsection{\ref{eq:bilevel_stochastic_program} Formulation}

Let $V$ be the set of geographical areas, $\mathcal{R}_i$ the set of available treatments in area $i$ (including the \emph{no treatment} option), and $\mathcal{T}$ the set of time periods. The leader's decision variable $x_{ir} \in \{0,1\}$ indicates whether treatment $r$ is applied in area $i$. Nature's decision variable $z_{ir}^{ts} \in \{0,1\}$ indicates whether a fire occurs in area $i$ under treatment $r$ in period $t$ under scenario $s$. The \ref{eq:bilevel_stochastic_program} formulation is as follows:

\begin{subequations} \label{eq:biftp_bsp}
\begin{align}
    \min_{x} \max_{z} \quad & \sum_{s \in \cS} p_s \sum_{i \in V,t \in \mathcal{T},r \in \mathcal{R}_i} a_{ir}^{ts} z_{ir}^{ts} \label{eq:biftp_obj} \\
    \text{subject to} \quad & \sum_{r \in \mathcal{R}_i} x_{ir} = 1, && i \in V \\
    & \sum_{i \in V,r \in \mathcal{R}_i} c_{ir} x_{ir} \le E \\
    & \sum_{i \in V_k,r \in \mathcal{R}_i} z_{ir}^{ts} \le b_k^{ts}, && k \in \mathcal{K}, t \in \mathcal{T}, s \in \cS \\
    & \sum_{t \in \mathcal{T}} z_{ir}^{ts} \le x_{ir}, && i \in V, r \in \mathcal{R}_i, s \in \cS \label{eq:biFTP_protection} \\
    & x_{ir} \in \{0,1\}, \quad z_{ir}^{ts} \in \{0,1\}
\end{align}
\end{subequations}
where $a_{ir}^{ts}$ is the damage incurred, $c_{ir}$ is the treatment cost, and $E$ is the total treatment budget. Nature's attacks are constrained geographically by overlapping clusters $V_k \in \mathcal{K}$, and $b_k^{ts}$ limits the number of simultaneous fires in cluster $k$.

\subsection{\ref{eq:exponential_reformulation_BSP} Reformulation and Big-$M$ Parameters}

Due to the tight coupling between the leader's and follower's decisions ($z_{ir}^{ts} \le x_{ir}$), standard Big-$M$ reformulations yield weak relaxations. When the leader alters a treatment $x_{ir}$, nature's previous attack variable $z_{ir}^{ts}$ is trivially invalidated, thereby removing the associated cut.

To formulate a robust \ref{eq:exponential_reformulation_BSP}, we exploit the structure of nature's constraints. Nature's budget $b_k^{ts}$ limits the \emph{areas} that can be attacked, irrespective of the \emph{treatment type} the leader implements. 
We can aggregate any follower decision $\hat{z}$ across treatment types to extract the spatio-temporal attack pattern $\hat{v}_{it}(\hat{z}) = \sum_{r \in \mathcal{R}_i} \hat{z}_{ir}^{ts}$, which is a binary indicator equal to one if and only if $\hat{z}$ attacks area $i$ in period $t$.
Nature can deploy the identical attack pattern $\hat{v}_{it}$ against any new leader decision $x$ (up to a projection of this attack onto the feasible space, see \eqref{eq:biFTP_protection}) and still satisfy every cluster budget $b_k^{ts}$, because the cluster sums depend only on the areas attacked, not on the treatment types. Therefore, instead of using Big-$M$ variables to block historical responses, we generate global Benders-like structural cuts. The leader's damage under scenario $s$ is bounded below by evaluating the historical attack pattern against the current treatment selection:
\begin{equation}
    \text{Damage}^s \ge \sum_{i \in V,r \in \mathcal{R}_i,t \in \mathcal{T}} a_{ir}^{ts} x_{ir} \hat{v}_{it}(\hat{z}), \quad \forall \hat{z} \in \hat{\Z}_s.
\end{equation}
Since $\sum_{r \in \mathcal{R}_i} x_{ir} = 1$, the summation over $r$ acts as a selector: the right side sums the damage to each attacked area under the \emph{current} plan $x$. The cut is valid because any permuted attack pattern is a feasible, though not necessarily optimal, response to the leader's decision $x$, so the worst-case damage $\text{Damage}^s$ can only be larger. This formulation eliminates the need for large Big-$M$ parameters, mitigating numerical instability and forcing the leader to immediately account for geographic vulnerabilities, regardless of the chosen treatment.

\subsection{Tavasl\i o\u glu et al. Reformulation}\label{subsec:biftp_p4}

We also cast the BiFTP model in the P4 form of \citet{Tavasliouglu2019SolvingStochastic}. Recall that nature acts as the follower, and the planner's decisions enter nature's problem only through the coupling constraint \eqref{eq:biFTP_protection}. The tender is therefore the treatment plan itself. Because all scenarios share the cluster incidence structure and the coupling matrix, a single family of value functions suffices, and the tender set is common to all scenarios, $\Q = \X = \{ x^{(1)}, \ldots, x^{(\vert \Q \vert)} \}$, where $\X$ collects the plans satisfying $\sum_{r \in \mathcal{R}_i} x_{ir} = 1$ for all $i \in V$ and $\sum_{i \in V, r \in \mathcal{R}_i} c_{ir} x_{ir} \le E$. Nature's value function on a plan is
\begin{equation*}
g_s(x) = \max_{z} \left\{ \sum_{i \in V, r \in \mathcal{R}_i, t \in \mathcal{T}} a_{ir}^{ts} z_{ir}^{ts} \colon \quad\begin{aligned}
\sum_{i \in V_k, r \in \mathcal{R}_i} z_{ir}^{ts} \le b_k^{ts} \ \forall k \in \mathcal{K}, t \in \mathcal{T}; \\ 
\quad \sum_{t \in \mathcal{T}} z_{ir}^{ts} \le x_{ir} \ \forall i \in V, r \in \mathcal{R}_i
\end{aligned} 
\right\}.
\end{equation*}
With a one-hot selector $u \in \{0,1\}^{\vert \Q \vert}$ over plans, \eqref{eq:biftp_bsp} becomes

\begin{subequations}\label{subeq:biftp_p4}
\begin{align}
    \minimize_{x,z,u} \quad & \sum_{s \in \cS} p_s \sum_{i \in V, r \in \mathcal{R}_i, t \in \mathcal{T}} a_{ir}^{ts} z_{ir}^{ts} \\
    \text{subject to} \quad & \sum_{r \in \mathcal{R}_i} x_{ir} = 1, && i \in V, \\
    & \sum_{i \in V, r \in \mathcal{R}_i} c_{ir} x_{ir} \le E, \\
    & \sum_{i \in V_k, r \in \mathcal{R}_i} z_{ir}^{ts} \le b_k^{ts}, && k \in \mathcal{K}, t \in \mathcal{T}, s \in \cS, \\
    & \sum_{t \in \mathcal{T}} z_{ir}^{ts} \le x_{ir}, && i \in V, r \in \mathcal{R}_i, s \in \cS, \\
    & x_{ir} = \sum_{\ell = 1}^{\vert \Q \vert} x_{ir}^{(\ell)} u_\ell, && i \in V, r \in \mathcal{R}_i, \\
    & \sum_{\ell = 1}^{\vert \Q \vert} u_\ell = 1, \\
    & \sum_{i \in V, r \in \mathcal{R}_i, t \in \mathcal{T}} a_{ir}^{ts} z_{ir}^{ts} = \sum_{\ell = 1}^{\vert \Q \vert} g_s\!\left( x^{(\ell)} \right) u_\ell, && s \in \cS, \label{eq:biftp_p4_value_function} \\
    & x_{ir} \in \{0,1\}, \quad z_{ir}^{ts} \in \{0,1\}, \quad u \in \{0,1\}^{\vert \Q \vert}.
\end{align}
\end{subequations}

The value-function equality \eqref{eq:biftp_p4_value_function} is load-bearing here in a way it is not for the UBKP: because the planner has no direct objective term, this equality is the sole link between the chosen plan and the realized adversarial damage, and dropping it would allow the planner to report zero damage. As with the UBKP, this reformulation is free of big-$M$ parameters, and the value-function equality \eqref{eq:biftp_p4_value_function} is instrumental in avoiding them.
The trade-off, however, is far more severe than in \cref{subsec:ubkp_p4}. Because the tender coincides with the plan, no analog of the grid construction is available: the tender set is the plan set itself, $ \Q  = \X $. Hence, P4 must enumerate all budget-feasible plans and evaluate $g_s$ on each. The number of plans grows exponentially in $\vert V \vert$, so this enumeration, rather than the final mixed-integer program, becomes the binding constraint on tractability. Our scenario decomposition never enumerates $\X$, and its per-iteration work is independent of $\vert \X \vert$.

\subsection{Parameter Generation}

To generate the parameters, we consider three treatment types: No-Treatment (NT) with zero cost, Prescribed Burning (PB) with cost $c_{ir} = 1$, and Thinning-from-Below (TFB) with cost $c_{ir} = 5$.

The damage parameters $a_{ir}^{ts}$ are intended to reflect treatment effectiveness. For each area $i$, period $t$, and scenario $s$, a base damage value is sampled as $D_i^{ts} \sim \mathcal{U}(10, 100)$. Treatment-specific damages are then defined as follows:
\begin{itemize}    
\item NT: $a_{i,\text{NT}}^{ts} = D_i^{ts}$,
\item PB: $a_{i,\text{PB}}^{ts} =  \alpha_i^{ts} D_i^{ts} $, where $\alpha_i^{ts} \sim \mathcal{U}(0.5, 0.8)$,
\item TFB: $a_{i,\text{TFB}}^{ts} =  \beta_i^{ts} D_i^{ts} $, where $\beta_i^{ts} \sim \mathcal{U}(0.1, 0.4)$.
\end{itemize}

The overlapping clusters $\mathcal{K}$ are constructed by sampling, for each cluster $k \in \mathcal{K}$, a subset $V_k \subseteq V$ whose cardinality is drawn from $\mathcal{U}\{\lceil 0.3 \vert V \vert \rceil, \ldots, \lfloor 0.7 \vert V \vert \rfloor\}$. The attack budget bounds $b_k^{ts}$ for nature are sampled from $\mathcal{U}\{1,\dots,\lfloor 0.5 \vert V_k\vert \rfloor\}$ for each cluster $k$, period $t$, and scenario $s$. Finally, scenario probabilities are set to be equal, i.e., $p_s = 1 / \vert \cS \vert$.

We varied the number of geographical areas $\vert V \vert \in \{10, 20, 50, 70, 100\}$, setting the treatment budget $E = \vert V \vert$, the number of overlapping clusters $\vert \mathcal{K} \vert \in \{1, 20, 50\}$, and the number of scenarios $\vert \cS \vert \in \{100, 500, 1000\}$, over two time periods, generating a test bed of 45 instances with increasing spatial complexity, one per parameter combination. The computational environment, resource limits, and benchmark calibration are identical to those described in \cref{subsec:environment}.

\subsection{Numerical Results}

\Cref{fig:biftp_results}(a) reports the relative optimality gap profile, and the contrast with the UBKP is stark, reflecting the structural difference identified in \cref{subsec:biftp_p4}. The profile of \cref{alg:SSCP} rises steadily and reaches $100\%$: every instance is returned with a valid bound, $13.3\%$ are solved to within $0.1\%$, $37.8\%$ attain a gap below $1\%$, $64.4\%$ fall within $10\%$, and the largest gap over the whole test bed is $0.475$. In contrast, the P4 profile is flat at $6.7\%$ across the entire range because P4 produces a bound in only 3 of the 45 instances. On those three, the reformulation is exact and certifies optimality, but on the remaining 42, either the plan enumeration or the value-function phase exhausts the budget before the mixed-integer program can be built, so neither an incumbent nor a bound is available. At these spatial scales, the comparison is therefore not one of gap quality but of usability: the benchmark either solves an instance outright or returns nothing, whereas the scenario decomposition returns a usable bound on all 45.

\begin{figure}[htbp]
\centering
\subfigure[Relative optimality gap profile]{\includegraphics[width=0.48\textwidth]{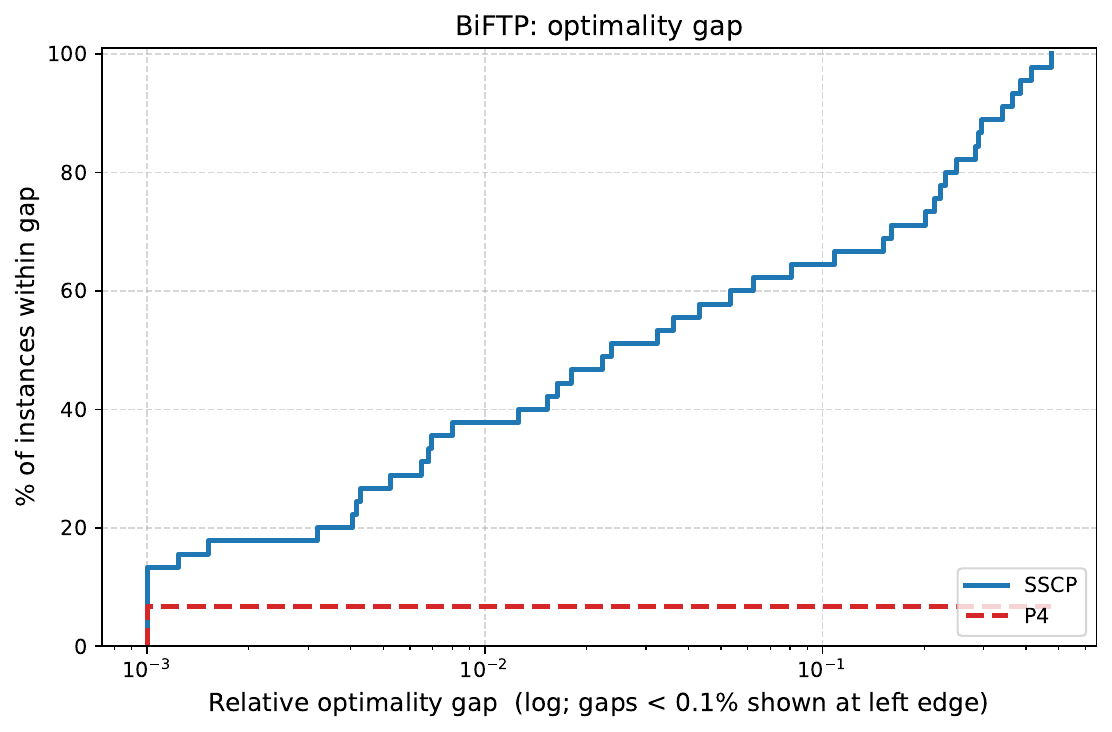}}
\hfill
\subfigure[Gap versus solution time]{\includegraphics[width=0.48\textwidth]{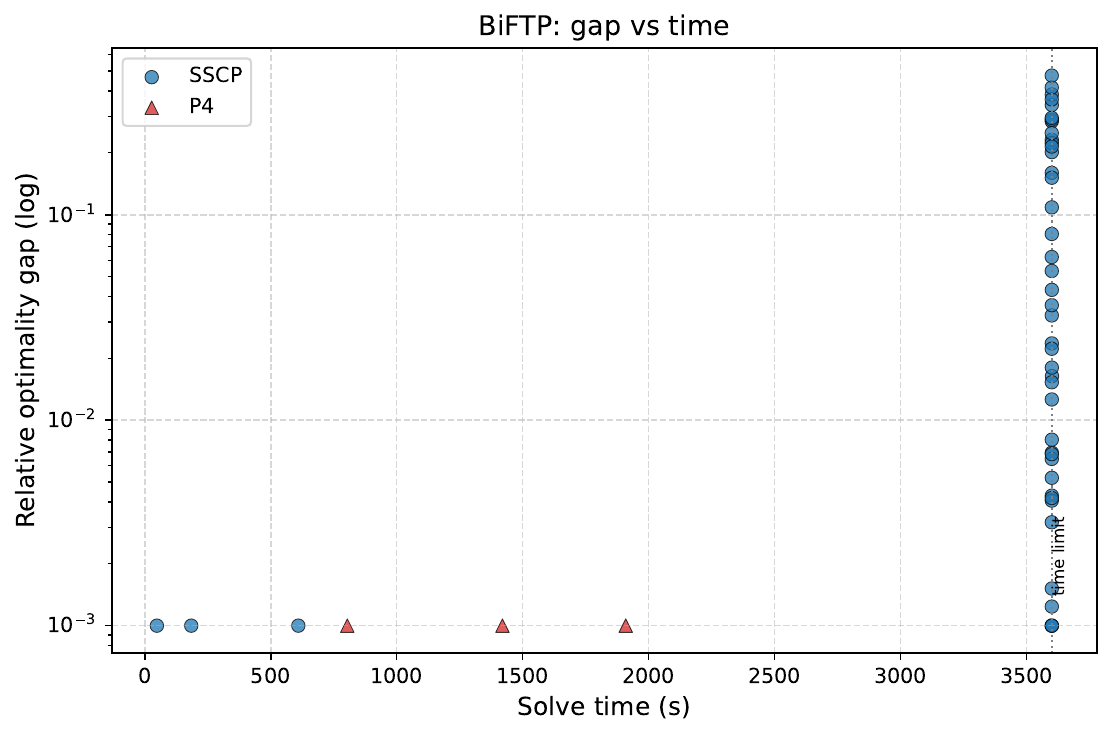}}
\\[0.5em]
\subfigure[Peak memory profile]{\includegraphics[width=0.48\textwidth]{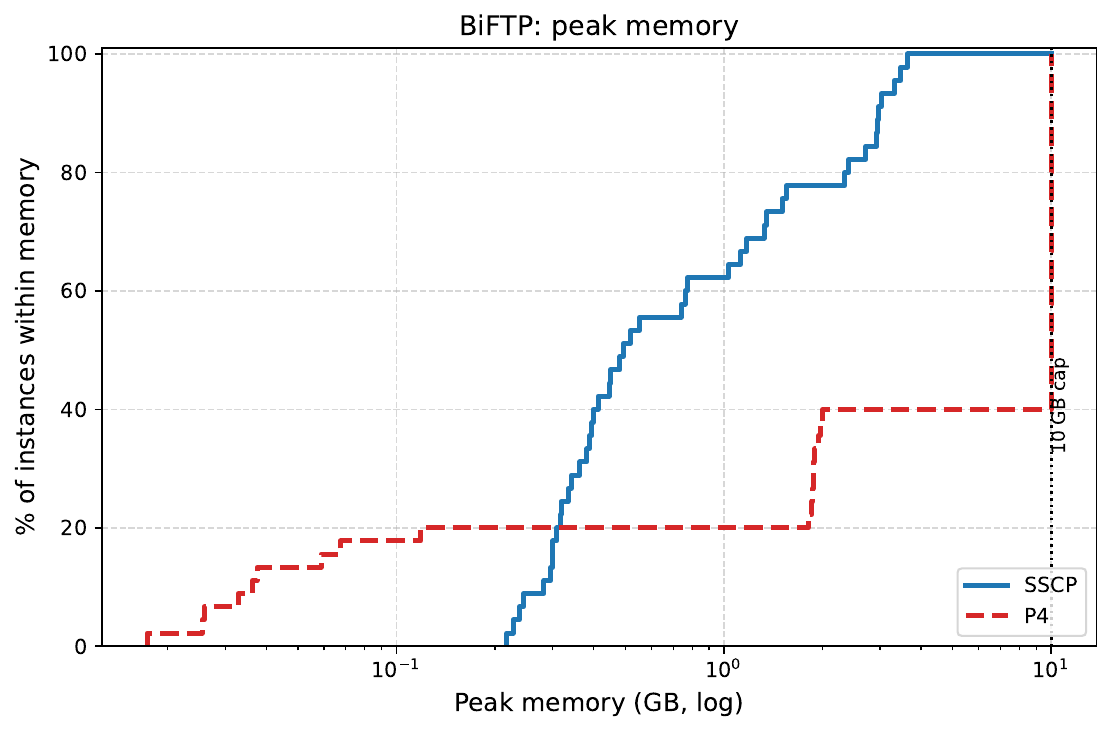}}
\hfill
\subfigure[Work per instance]{\includegraphics[width=0.48\textwidth]{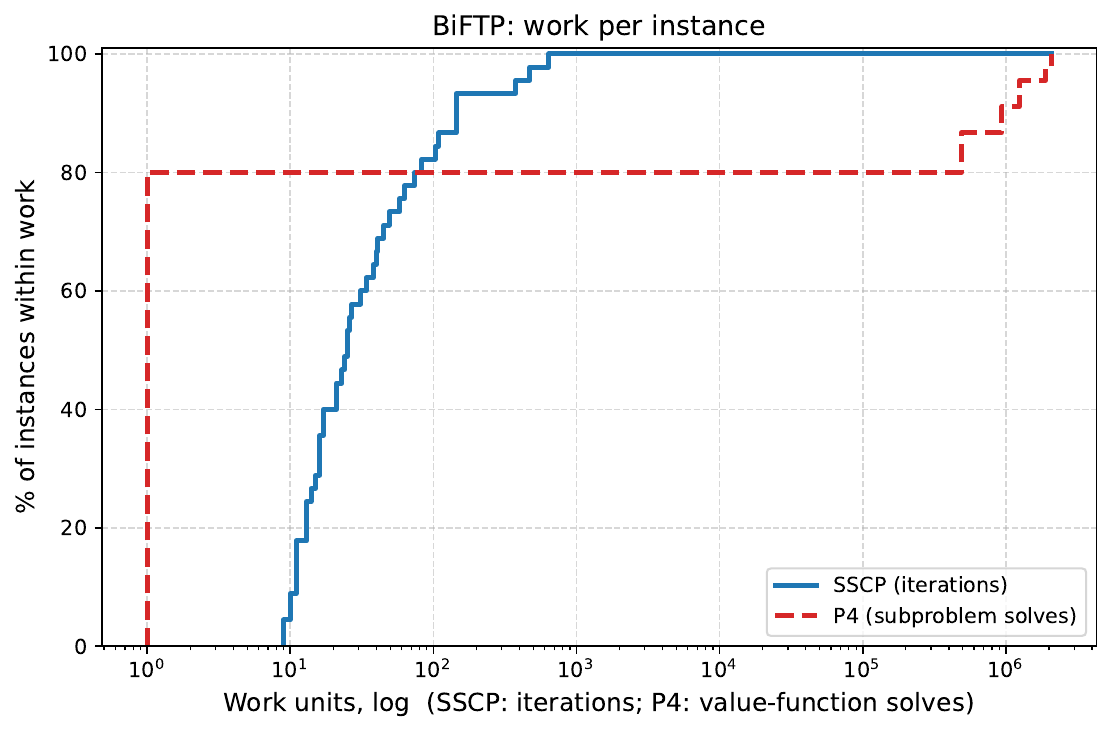}}
\caption{BiFTP results across the 45 instances. Conventions follow \cref{fig:ubkp_results}. The dotted vertical line in panel (c) marks the $10$ GB budget imposed on both methods. In panel (d), the sharp spike on the left in P4 represents the 36 instances terminated during plan enumeration before completing a single value-function solve.}
\label{fig:biftp_results}
\end{figure}

\looseness-1\Cref{fig:biftp_results}(c) isolates the principal mechanism behind P4's failure by reporting peak memory. The memory consumed by \cref{alg:SSCP} remains modest across the entire test bed, never exceeding $3.64$ GB, because the scenario subproblems are solved one at a time and only the accumulated follower responses $\widehat{\Z}_s$ are retained between iterations. The P4 curve behaves quite differently: it piles up against the 10 GB limit, at which $60\%$ of the instances terminate. The tender set of \cref{subsec:biftp_p4} must be constructed in full, together with one value function per plan and scenario, so memory, rather than time, is the binding resource for the majority of these instances. An additional 15 instances reach the time limit during tender set construction or the value-function phase. This is precisely the regime in which \citet{Tavasliouglu2019SolvingStochastic} anticipate that their reformulation will struggle, and it is the regime for which our decomposition is designed.

\begin{table}[htbp]
\centering\footnotesize
\caption{BiFTP: outcome distribution and matched-subset comparison over the 45 instances. Conventions as in \cref{tab:ubkp_results}.}
\begin{threeparttable}
\label{tab:biftp_results}
\begin{tabular}{lrr}
\toprule
 & \cref{alg:SSCP} & P4 \\
\midrule
\multicolumn{3}{l}{\emph{Panel A: outcomes over all 45 instances}} \\
Instances returning a valid bound & 45 & 3 \\
Gap $\le 0.1\%$ & 13.3\% & 6.7\% \\
Gap $\le 1\%$ & 37.8\% & 6.7\% \\
Gap $\le 5\%$ & 57.8\% & 6.7\% \\
Gap $\le 10\%$ & 64.4\% & 6.7\% \\
Largest finite gap & 0.475 & 0.000 \\
Largest peak memory (GB) & 3.64 & 10.00 \\
Instances at the memory cap & 0.0\% & 60.0\% \\
Optimal / time limit / memory limit & 3 / 42 / 0 & 3 / 15 / 27 \\
\midrule
\multicolumn{3}{l}{\emph{Panel B: mean relative gap, by matched subset}} \\
Both returned a bound ($n=3$) & 0.0007 & 0.0000 \\
Only \cref{alg:SSCP} returned a bound ($n=42$) & 0.1157 & --- \\
Only P4 returned a bound ($n=0$) & --- & --- \\
Each over its own bounded set & 0.1080\tnote{a} & 0.0000\tnote{b} \\
\midrule
\multicolumn{3}{l}{\emph{Panel C: mean peak memory (GB), by matched subset}} \\
Both hit the time limit ($n=13$) & 1.980 & 1.326 \\
Only \cref{alg:SSCP} hit the time limit ($n=29$) & 0.760 & 9.313 \\
Only P4 hit the time limit ($n=2$) & 1.053 & 0.063 \\
\midrule
\multicolumn{3}{l}{\emph{Panel D: instances proved optimal}} \\
Both proved optimality ($n=1$): memory (GB) & 0.227 & 0.119 \\
\hspace{1em}time (s) & 48.3 & 804.1 \\
Only \cref{alg:SSCP} ($n=2$): memory (GB) & 1.053 & 0.063 \\
\hspace{1em}time (s) & 397.2 & 3602.0 \\
Only P4 ($n=2$): memory (GB) & 3.212 & 0.027 \\
\hspace{1em}time (s) & 3600.2 & 1664.5 \\
\bottomrule
\end{tabular}
\begin{tablenotes}\footnotesize
\item[a] Mean over all 45 instances. \item[b] Mean over the 3 instances on which P4 returned a bound.
\end{tablenotes}
\end{threeparttable}
\end{table}

\Cref{fig:biftp_results}(b) makes the all-or-nothing behavior visible on a per-instance basis. Because runs without a bound are omitted, only three P4 points appear, at $804$, $1420$, and $1909$ seconds. These are the three $\vert V \vert = 10$ instances with $\vert \cS \vert = 100$, on which the plan set is sufficiently small ($\vert \Q \vert = 4889$) for the enumeration, the value-function phase, and the final program to complete within the budget. \Cref{alg:SSCP}, in turn, proves optimality on the three $\vert V \vert = 10$, $\vert \mathcal{K} \vert = 1$ instances after $48$, $185$, and $610$ seconds, and every one of its remaining runs sits at the one-hour line with a finite gap. The P4 failures are, moreover, decided early: the median memory-limited run is stopped after roughly 12 minutes, so in most instances the benchmark's outcome is determined long before the hour elapses.

\Cref{fig:biftp_results}(d) reports work per instance and explains why successes are so scarce. \Cref{alg:SSCP} performs between 9 and 642 iterations (median 25), a count driven by the difficulty of the Lagrangian dual rather than by $\vert \X \vert$, since its per-iteration work never enumerates the plan set. Each P4 success, by contrast, requires roughly $4.9 \times 10^5$ value-function solves at $\vert V \vert = 10$. Runs that fail at the time limit accumulate up to $2.1 \times 10^6$ solves without completing the phase. For $\vert V \vert \ge 20$, enumeration itself exhausts the memory budget before a single value function is computed, as shown by the sharp spike on the left in the figure.

\Cref{tab:biftp_results} completes the picture. The matched subsets of Panel B are nearly degenerate, as only the three easiest instances are bounded by both methods, and on them both are essentially exact; the informative row is the 42 instances bounded only by \cref{alg:SSCP}, on which its mean gap is $0.116$. Panel C carries the memory story in matched form: across the 29 instances where only \cref{alg:SSCP} hits the time limit, it averages $0.76$ GB, compared to $9.31$ GB for P4, which, on those instances, is stopped by the memory cap instead. Panel D records that each method proves optimality on exactly three instances, but the two sets share only one instance, on which \cref{alg:SSCP} is roughly $17$ times faster ($48$ against $804$ seconds). For the two instances where only \cref{alg:SSCP} certifies optimality, P4 stands at the time limit without a bound. Conversely, for the two instances where only P4 certifies optimality, \cref{alg:SSCP} terminates at the time limit with an open gap.

\section{Concluding remarks}\label{sec:concluding_remarks}

This paper presents a novel exact decomposition framework for solving mixed-integer bilevel stochastic programs, a class of problems known to be $\Sigma_2^p$-hard. By integrating value-function reformulations with a scenario-wise Lagrangian decomposition, we establish a mathematically rigorous stochastic subgradient cutting-plane algorithm. We prove that this approach converges to the global optimum in a finite number of iterations.

Benchmarking against the generalized value-function reformulation of \citet{Tavasliouglu2019SolvingStochastic} under a common time and memory budget shows that the two approaches are complementary on the UBKP, where tender sets admit a compact grid construction. It also shows that our decomposition is the only method that returns usable bounds on the BiFTP, where the reformulation must enumerate the leader's plan set and is stopped by the memory budget on most instances.

Several promising research directions emerge from our work. For example, extending this framework to the pessimistic formulation of the bilevel stochastic problem \ref{eq:bilevel_stochastic_program}, in which the follower selects the optimal response that inflicts maximum damage on the leader, would require a novel approach to representing the lower-level optimality conditions.

\bibliographystyle{apalike} 
\bibliography{bib} 

\end{document}